\newtheorem{theorem}{Theorem}[section]
\newtheorem{proposition}[theorem]{Proposition}
\newtheorem{corollary}[theorem]{Corollary}
\newtheorem{remark}[theorem]{Remark}
\newcommand\R{\mathbb{R}}
\newcommand\G{\mathbb{G}}
\newcommand\He{\mathbb{H}}
\definecolor{rot}{rgb}{1.000,0.000,0.000}
\definecolor{blue}{rgb}{0.000,0.000,1.000}
\numberwithin{equation}{section}
\title[Inverse moving point source problem]{An inverse moving point source problem in electromagnetics}
\author[M. Li et al.]{Minghui Li, Guanghui Hu and Yue Zhao}
\subjclass[2010]{35R30, 78A46.}
\keywords{Maxwell equations, inverse moving point source, Lipschitz stability.}
\begin{document}

\begin{abstract}

This paper is concerned with an inverse moving point source problem in electromagnetics. The aim is to reconstruct the moving orbit from the tangential components of magnetic fields taken at a finite number of observation points.
The distance function between each observation point and the moving point source is computed by solving a nonlinear ordinary differential equation with an initial value. This ODE system only involves the measurement data from the tangential trace of the magnetic field at observation points.
As a consequence, the dynamical measurement data recorded at four non-coplanar points are sufficient to reconstruct the orbit function. A Lipschitz stability is established for the inverse problem, and
numerical experiments are reported to demonstrate the effectiveness of the proposed method. Numerical examples have shown that the reconstructed error  
depends linearly on the noise level and
 that the wave speed is a critical factor affecting the relative error.
\end{abstract}

\maketitle

\section{Introduction}

In this article, we consider the time-dependent Maxwell equations in a homogeneous background medium:
\begin{equation}\label{me}
\mu_0\partial_t \boldsymbol{H}(\boldsymbol{x}, t) + \nabla \times
\boldsymbol{E}(\boldsymbol{x},t) = 0,\quad
\varepsilon_0\partial_t \boldsymbol{E}(\boldsymbol{x},t) - \nabla \times
\boldsymbol{H}(\boldsymbol{x},t) = -\sigma \boldsymbol E + {\boldsymbol J}(\boldsymbol x, t),
\end{equation}
for $(\boldsymbol{x}, t)\in\mathbb R^3\times \mathbb R^+$, where
$\boldsymbol E$ and $\boldsymbol H$ are the electric and magnetic fields,
respectively. The source function ${\boldsymbol J}$ represents the electric
current density, $\varepsilon_0$ and $\mu_0$ the dielectric permittivity and the
magnetic permeability, respectively, and $\sigma$ the electric conductivity
which is assumed to vanish identically. Eliminating the electric field $\boldsymbol E$ from \eqref{me},
we obtain the governing equation for the magnetic field $\boldsymbol H$:
\begin{equation}\label{mf}
 \frac{1}{c^2}\partial^2_{t}\boldsymbol H(\boldsymbol{x},t) +
\nabla\times(\nabla\times\boldsymbol H
 (\boldsymbol{x},t))= \nabla\times{\boldsymbol J}(\boldsymbol x, t)=: \boldsymbol F(\boldsymbol x, t),\quad
\boldsymbol{x}\in \mathbb R^3,~ t>0,
\end{equation}
which is supplemented by the homogeneous conditions
\begin{align}\label{ic}
\boldsymbol H(\boldsymbol{x},t) = \partial_t \boldsymbol H(\boldsymbol{x},t) =
0, \quad \boldsymbol x \in \mathbb R^3,\; t\leq 0.
\end{align}
Here $c = \sqrt{\frac{1}{\varepsilon_0\mu_0}}$ is the wave speed of the electromagnetic wave propagating through the background medium.
From the first equation in \eqref{me} and the initial conditions, we obtain
\[
\partial_t[\nabla \cdot\boldsymbol H(\boldsymbol{x}, t)] = 0,\quad \nabla\cdot\boldsymbol H(\boldsymbol{x},0) =0,
 \] which lead to $\nabla\cdot\boldsymbol H(\boldsymbol{x}, t)=0$ for all $t\in [0, +\infty)$. Hence, the equation \eqref{mf} becomes
\begin{equation}\label{mf_1}
 \partial^2_{t}\boldsymbol H(\boldsymbol{x},t) -
\Delta\boldsymbol H
 (\boldsymbol{x},t)= \boldsymbol F(\boldsymbol x, t),\quad
\boldsymbol{x}\in \mathbb R^3,~ t>0.
\end{equation}
In  this paper, the electromagnetic wave is assumed to be excited by a moving point source. Specifically, the source function
$\boldsymbol F$ is supposed to be the form of
\begin{align}\label{st}
 \boldsymbol F(\boldsymbol x, t)= \delta(\boldsymbol x - \boldsymbol a(t))\,\boldsymbol f(t),
\end{align}
where $\delta$ is the Dirac function, $\boldsymbol f: \mathbb R_+\rightarrow \mathbb R^3$ is a $C^1$-smooth vector-valued temporal function, and
$\boldsymbol a:\mathbb R_+\rightarrow\mathbb R^3$ is the orbit function of a
moving point source with $\boldsymbol a(t)\in C^2[0, \infty)$. Driven by practical applications, we make the
following assumptions throughout the paper:
\begin{enumerate}
\item The source moves in a bounded domain $D$, i.e., $\{\boldsymbol a(t): t\in [0, +\infty)\} \subset D$.
\vskip8pt
\item The speed of the moving source is slower than the wave speed, that is, $|\boldsymbol a^\prime(t)|\leq c_0<c$ for some $c_0>0$ and for all $t>0$.
\vskip8pt
\item The acceleration of the moving source is bounded above, that is, $|\boldsymbol a^{\prime\prime}(t)|\leq a_0$ for some $a_0>0$ and for all $t>0$.
\end{enumerate}

Let $\Omega$ be a smooth domain with the boundary $\Gamma:=
\partial \Omega$ such that $D\subset\subset\Omega$.
Denote by $\boldsymbol \nu$  the outward unit normal vector on $\Gamma$.
In this work, we study the inverse moving source problem of determining the orbit $\boldsymbol a(t)$ from boundary measurements of the tangential trace of the magnetic field over a finite time interval on $\Gamma.$ Specifically, we propose a numerical method to solve the following inverse problem:
\vskip5pt
\noindent\textbf{IP}: Given $\boldsymbol f(t)$, reconstruct the orbit $\boldsymbol a(t)$ of the moving point source for $t\in[0, T_0]$ from the tangential components of the magnetic field, $\boldsymbol H(\boldsymbol x_j, t) \times \boldsymbol \nu(\boldsymbol x_j)$, with $\boldsymbol x_j\in \Gamma$, $j=1,2,3,4$ and $t\in [0, T]$. Here $\boldsymbol x_j$ are four appropriately chosen observation points and $T>T_0>0$ is sufficiently large.

\begin{figure}
\centering
\includegraphics[width=0.6\textwidth]{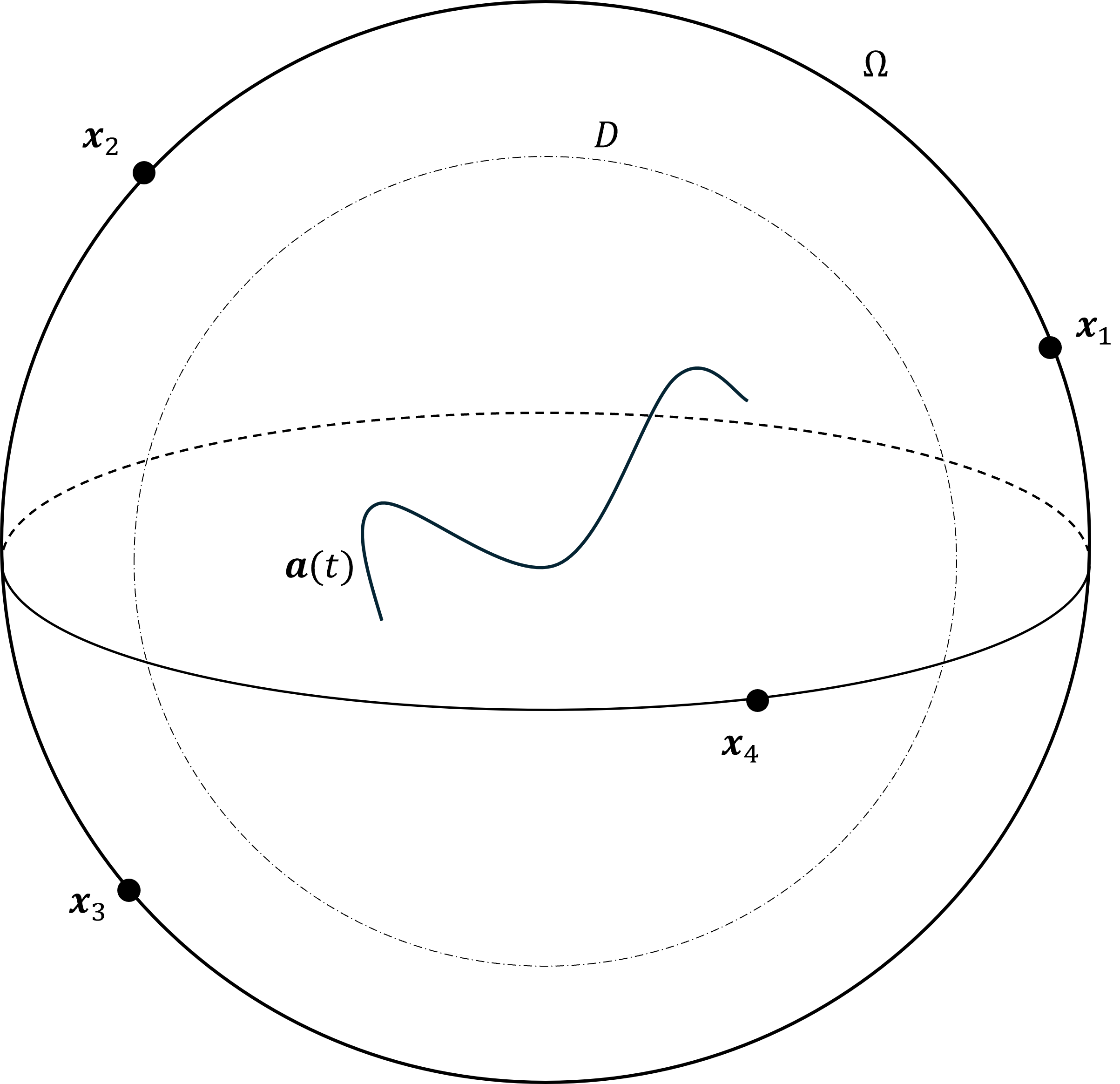}
\caption{Illustration of the observation points and the orbit.}
\label{figure:Illustration of the observation points and the orbit}
\end{figure}

Inverse moving source problems for wave equations aim at determining the unknown orbit of the moving source from the generated wave field measured
on the boundary. Such problems have important applications in scientific and industrial areas such as radar detection and biomedical imaging \cite{borcea_3}.
However, compared with the linear inverse source problems of determining the whole source term $\boldsymbol F(\boldsymbol x, t)$ investigated in
\cite{garnier_1, bao, hu, hu2, isakov, yama}, the inverse moving source problems are rarely studied. The reason is that the inverse moving source problem is highly nonlinear.
For acoustic wave equations with point moving source, numerical methods such as time-reversal,  Bayesian, algebraic and sampling methods can be found in \cite{garnier, alge, liu}.  In a recent work \cite{Hu_1}, a novel factorization method was proposed for recovering partial information on the trajectory of a moving point source.
A uniqueness result was proved in \cite{hu1} where the profile of the moving source is assumed to be a general $L^2$ function.
In the case of a moving acoustic point source, a Lipschitz stability estimate was derived in \cite{triki_ip}.
 For other works on the topic of detecting moving targets we refer the reader to \cite{borcea, borcea_1}.

This article is aimed to develop an efficient computational method for solving the inverse moving source problem in electromagnetic waves.
Specifically, fixing an observation point $\boldsymbol x$ on the boundary $\Gamma$, we introduce a distance function $v(t):=|\boldsymbol x - \boldsymbol a(t)|$,
which is the distance from the observation point to the point source at time $t$. Based on the representation of the magnetic field in terms of the Green tensor,
we derive an initial value nonlinear ordinary differential equation (ODE) satisfied by the distance function which involves the boundary measurements of the tangential magnetic field component $\boldsymbol H (\boldsymbol x)\times \boldsymbol \nu(\boldsymbol x) $ at $\boldsymbol x$. We compute the distance function by solving the ODE. Then by choosing four non-coplanar observation points on  $\Gamma$,
the location of the moving orbit can be determined by the corresponding four distance functions. Numerically, we locate the orbit by solving a system of linear equations.
Based on the computational method, we also derive a Lipschitz stability estimate for the inverse problem, which implies the uniqueness.
The idea of the proposed method traces back to \cite{hu2}, where a uniqueness result on the acoustic inverse moving source problem was proved. 
The stability proof in our work is inspired by an inverse moving point source problem in acoustics \cite{triki_ip}. However, in contrast with \cite{triki_ip}, our approach depends on the governing equation of the distant function proposed in  \cite{hu2},  which introduces a reformulated system with enhanced interpretability through physically meaningful variables.

This paper is organized as follows. In Section \ref{nm}, we propose a numerical method to solve the inverse moving source problem using the tangential trace of the magnetic field
 measured at four non-coplanar boundary points
over a finite period of time. In Section \ref{stabi}, we prove a Lipschitz stability for the inverse problem based on the proposed method. Numerical examples are presented to illustrate the effectiveness of the proposed method in Section \ref{numerics}.

\section{Numerical scheme}\label{nm}

In this section, we develop a numerical method of reconstructing the moving orbit $\boldsymbol a(t)$ from sparse boundary measurements. Let $\mathbb{G}$ be the Green tensor of the vector-valued wave equation which satisfies
\begin{equation}
\left\{
    \begin{array}{ll}
        \frac{1}{c^2}\partial_t^2\mathbb{G} - \Delta\mathbb{G} = \delta(t)\delta(\boldsymbol{x})\mathbb{I},   \\[5pt]
        \mathbb{G}(\boldsymbol{x},0) = \partial_t\mathbb{G}(\boldsymbol{x},0) = 0,
    \end{array}
\right. \notag
\end{equation}
where $\mathbb{I}\in\mathbb R^{3\times 3}$ is the identity matrix.
The Green tensor has the explicit form
\begin{equation}
    \mathbb{G}(\boldsymbol{x},t) = \frac{1}{4\pi |\boldsymbol{x}|}\delta\left(\frac{|\boldsymbol{x}|}{c}-t\right)\mathbb{I}, \notag
\end{equation}
Then the solution to the initial value problem \eqref{mf}--\eqref{ic} is given by
\begin{align}
    \boldsymbol{H}(\boldsymbol{x},t) &= \int_0^{+\infty} \int_{\mathbb{R}^3} \mathbb{G}(\boldsymbol{x-y},t-s)\boldsymbol{f}(s)\delta(\boldsymbol{y} - \boldsymbol{a}(s)) d\boldsymbol{y} ds \notag \\
    &= \int_0^{+\infty} \int_{\mathbb{R}^3} \frac{1}{4\pi |\boldsymbol{x-y}|}\delta\left(\frac{|\boldsymbol{x-y}|}{c}-t+s\right)\boldsymbol{f}(s)\delta(\boldsymbol{y} - \boldsymbol{a}(s)) d\boldsymbol{y} ds \notag \\
    &= \int_0^{+\infty} \frac{1}{4\pi |\boldsymbol{x}-\boldsymbol{a}(s)|} \delta\left(\frac{|\boldsymbol{x}-\boldsymbol{a}(s)|}{c}+s-t\right) \boldsymbol{f}(s) ds .\notag
\end{align}

For a given observation point $\boldsymbol{x}$ on $\Gamma$, define the function $$g(t):=t+\frac{|\boldsymbol{x}-\boldsymbol{a}(t)|}{c},\quad t\in(0,T_0).$$
A direct calculation gives
\begin{equation}
\frac{d}{dt}g(t)=1+\frac{\boldsymbol{a}'(t)\cdot(\boldsymbol{x}-\boldsymbol{a}(t))}{c|\boldsymbol{x}-\boldsymbol{a}(t)|}>1-\frac{c_0}{c}>0,\notag
\end{equation}
and thus $g(t)$ is a monotonically increasing continuous function. Therefore, there exists an inverse function of $g$ denoted by $g^{-1}(r)$, where $ r\in \left[\frac{|\boldsymbol{x}-\boldsymbol{a}(0)|}{c},T_0+\frac{|\boldsymbol{x}-\boldsymbol{a}(T_0)|}{c}\right]$.
Let $v(t; \boldsymbol{x}):=|\boldsymbol{x}-\boldsymbol{a}(t)|$ be the distance function between the observation point $\boldsymbol{x}$ and the point source at time $t$. Then
\begin{equation}
g(t)=t+\frac{v(t;\boldsymbol{x})}{c},\quad g'(t)=1+\frac{v'(t;\boldsymbol{x})}{c},
\end{equation}
where $v'$ denotes the derivative of $v$ with respect to the time variable $t$. Changing the variables, one can simplify the expression of $\boldsymbol{H}$ as follows: 
\begin{align}
    \boldsymbol{H}(\boldsymbol{x},t) &= \int_0^{+\infty} \frac{1}{4\pi |\boldsymbol{x}-\boldsymbol{a}(s)|} \delta\left(\frac{|\boldsymbol{x}-\boldsymbol{a}(s)|}{c}+s-t\right) \boldsymbol{f}(s) ds \notag \\
    &= \int_0^{+\infty} \frac{1}{4\pi |\boldsymbol{x}-\boldsymbol{a}(s)|} \delta(g(s)-t) \boldsymbol{f}(s) ds \notag \\
    &= \int_{g(0)}^{+\infty} \frac{1}{4\pi |\boldsymbol{x}-\boldsymbol{a}(g^{-1}(r))|} \delta(r-t) \boldsymbol{f}(g^{-1}(r)) \frac{1}{g^{'}(g^{-1}(r))} d\, r \notag \\
    &= \He(t-|\boldsymbol{x}-\boldsymbol{a}(0)|/c)\frac{1}{4\pi |\boldsymbol{x}-\boldsymbol{a}(g^{-1}(t))|} \boldsymbol{f}(g^{-1}(t)) \frac{1}{g^{'}(g^{-1}(t))} \notag\\
    &= \He(t-|\boldsymbol{x}-\boldsymbol{a}(0)|/c)\frac{1}{4\pi v(g^{-1}(t);\boldsymbol{x})} \boldsymbol{f}(g^{-1}(t)) \frac{c}{c+v^{'}(g^{-1}(t);\boldsymbol{x})},\label{Equality: Explicit expression for B  }
\end{align}
where $\He$ is the Heaviside function defined by
\begin{equation}
\He(t)=
\left\{
    \begin{array}{ll}
        0, & t < 0, \\[5pt]
        1, & t \geq 0.
    \end{array}
\right. \notag
\end{equation}

\begin{remark}
Since \( \boldsymbol{a} \) is a $C^2$-smooth function, \( v \) and \( g \) are $C^2$-smooth functions. Consequently, the inverse function \( g^{-1} \) is also $C^2$-smooth. Furthermore, since \( f \) is a $C^1$-smooth function, for any fixed \(\boldsymbol{x} \),  when $t>|\boldsymbol{x}-\boldsymbol{a}(0)|/c$, $\boldsymbol{H}$ is a $C^1$-smooth function with respect to time $t$.

\end{remark}

Replacing $t$ by $g(t)$ in (\ref{Equality: Explicit expression for B  }) gives
\begin{align}
    \boldsymbol{H}(\boldsymbol{x},g(t))
    =\frac{\He(g(t)-g(0))\,c}{4\pi v(t;\boldsymbol{x}) (c+v^{'}(t;\boldsymbol{x}))} \boldsymbol{f}(t)
    = \frac{c}{4\pi v(t;\boldsymbol{x}) (c+v^{'}(t;\boldsymbol{x}))} \boldsymbol{f}(t)
    \end{align}    
for all $t>0$, because
\[
g(t)-g(0)=t+|\boldsymbol{x}-\boldsymbol{a}(t)|/c-|\boldsymbol{x}-\boldsymbol{a}(0)|/c\geq0, \quad\mbox{if}\quad t>0.
\] 
Taking the cross product of both sides with the outward unit normal vector $\boldsymbol \nu$, we obtain for $t>0$ that
\begin{align}
&\boldsymbol{H}(\boldsymbol{x},t+\frac{v(t;\boldsymbol{x})}{c})\times \boldsymbol \nu(\boldsymbol{x})\\
=&\boldsymbol{H}(\boldsymbol{x},g(t))\times \boldsymbol \nu(\boldsymbol{x})\\
=&\frac{c}{4\pi v(t;\boldsymbol{x}) (c+v^{'}(t;\boldsymbol{x}))} \boldsymbol{f}(t)\times \boldsymbol \nu(\boldsymbol{x}).\label{eq:2}
\end{align}
Here we indicate the dependance on $v$ of the left and right hand sides of the above equation.
Denote by $H_i(\boldsymbol{x},t+v(t;\boldsymbol{x})/c)$ and $f_i(\boldsymbol{x},t)$ the $i$-th component of $\boldsymbol{H}(\boldsymbol{x},t+v(t;\boldsymbol{x})/c)\times \boldsymbol \nu(\boldsymbol{x})$ and $\boldsymbol{f}(t)\times \boldsymbol \nu(\boldsymbol{x})$, respectively. Then we have
\begin{equation}
v^\prime(t;\boldsymbol{x})=\frac{cf_i(\boldsymbol{x},t)}{4\pi v(t;\boldsymbol{x}) H_i(\boldsymbol{x},t+v(t;\boldsymbol{x})/c)}-c, \,t\in(0,T_0),\label{vode}
\end{equation}
By our assumption on $\boldsymbol{a}$, it follows that \( v > 0 \), \( |v'| < c \) and that $v'$ is a $C^1$-smooth function with respect to $t>0$. This implies that the right hand side of \eqref{vode} is a well-defined function for all $t\in(0, T_0)$, even if the measurement data
$H_i(\boldsymbol{x},t+v(t;\boldsymbol{x})/c)$ vanishes at some moment. 

Denote the traveling time for the electromagnetic wave from the initial position $\boldsymbol{a}(0)$ of the point source to the observation point $\boldsymbol{x}$ by
\begin{equation}
    T(\boldsymbol x)=\inf\{t: \boldsymbol{H}(\boldsymbol{x},t)\neq \boldsymbol 0\}.\label{Equality: Definition of t_x  }
\end{equation}
According to the strong Huygens principle in three dimensions, we have
\begin{equation}
T(\boldsymbol x)=\frac{|\boldsymbol{x}-\boldsymbol{a}(0)|}{c}.\label{Equality: Explicit expression for t_x  }
\end{equation}
Therefore, the distance $|\boldsymbol{x} - \boldsymbol{a}(0)|$ from the initial position of the point source $\boldsymbol{a}(0)$ to the observation point $\boldsymbol{x}$
can be determined by the time when the receiver at $\boldsymbol{x}$ starts to receive the electromagnetic signal.
Moreover, combining the definition of $v(t;\boldsymbol{x})$ with equations (\ref{Equality: Definition of t_x  }) and (\ref{Equality: Explicit expression for t_x  }), we obtain the initial value of $v$ at $t=0$:
\begin{equation}
    v(0;\boldsymbol{x}) = c\, T(\boldsymbol x).\label{Equality: Initial value of v  }
\end{equation}

To reconstruct the moving orbit over the time interval $(0, T_0)$, we need the measurement data over a longer time period $(0, T)$ where $T>t_0$, due to the finite speed of the wave. 
we define the terminal time of the measurement data at $\boldsymbol x$ by
\begin{equation}
    T := T _0+ T_1, \notag
\end{equation}
where
\begin{equation}\label{T1}
    T_1:=\sup\limits_{\boldsymbol{z}\in \Gamma,\boldsymbol{y}\in D}\left\{ \frac{|\boldsymbol{z}-\boldsymbol{y}|}{c} \right\}\geq\frac{|\boldsymbol{x}-\boldsymbol{a}(t)|}{c}\quad \mbox{for all}\quad \boldsymbol{x}\in\Gamma,\, t\in (0,T_0). 
\end{equation}
It satisfies the inequality
\begin{equation}
    T \geq t + \frac{|\boldsymbol{x}-\boldsymbol{a}(t)|}{c}=t + \frac{v(t;\boldsymbol{x})}{c}\quad\mbox{for all}\quad t\in (0,T_0),\label{Inequality: T_1 is an upper bound of g  }
\end{equation}
which guarantees that the data over a sufficiently large period of time are collected to reconstruct $\{a(t): t\in(0, T_0)\}$.

Now we describe a non-iterative reconstruction method based on the measurement data taken at four receivers $\boldsymbol{x}_j\in\Gamma$.
First, for any fixed $i\in\{1,2,3\}$
we determine $v(t;\boldsymbol{x})$ by solving the initial value problem of the ordinary  differential equation
\begin{align}\label{vode_1}
\begin{cases}
        v'(t;\boldsymbol{x}) = \frac{cf_i(\boldsymbol{x},t)}{4\pi v(t;\boldsymbol{x}) H_i(\boldsymbol{x}, t + v(t;\boldsymbol{x})/c)}  - c, & t \in (0,T_0], \\[5pt]
        v(0;\boldsymbol{x}) = c \,T(\boldsymbol{x}).
 \end{cases}
\end{align}
It should be noticed that, in principle each index $i\in\{1,2,3\}$ can be used to determine $v$ and the right hand hand side is nonlinear with respect to $v$. In our numerical examples we are not restricted to a single index, because computational errors could appear if the denominator is much small. The system \eqref{vode_1} can be derived from the model $\eqref{mf_1}$ with the Dirac-type source term \eqref{st}, which however seems not applicable to extended moving sources. The Dirac distribution has been used to simplify the expression of the magnetic field \eqref{Equality: Explicit expression for B  }.
Define
$$
d_0:= \max_{\boldsymbol z\in\Gamma, \boldsymbol y\in D}|\boldsymbol z - \boldsymbol y|, \quad d_1:=\min_{\boldsymbol z\in\Gamma, \boldsymbol y\in D}|\boldsymbol z - \boldsymbol y|.
$$


\begin{proposition}
Given $\boldsymbol x\in\Gamma,$
the initial value problem of the nonlinear ordinary differential equation \eqref{vode_1} admits a unique solution $v(\cdot, \boldsymbol x)\in C^1[0, T_0]$.
\end{proposition}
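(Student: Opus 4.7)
The plan is to apply the Picard--Lindel\"of theorem to the nonlinear initial value problem \eqref{vode_1}. Existence is essentially free: by the very derivation of \eqref{vode_1} from \eqref{Equality: Explicit expression for B  } and \eqref{eq:2}, the physical distance $v^*(t):=|\boldsymbol{x}-\boldsymbol{a}(t)|$ is a $C^1[0,T_0]$ solution, and under the standing hypotheses it obeys the a priori bounds $d_1\le v^*(t)\le d_0$ and $|v^{*\prime}(t)|\le c_0<c$. Consequently the content of the proposition is really uniqueness on all of $[0,T_0]$, which I would obtain via local uniqueness plus a continuation argument.

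First, I would fix a compact tube
\[
K:=\{(t,v):\ t\in[0,T_0],\ v\in[d_1/2,\,2d_0]\}
\]
around the graph of $v^*$. On $K$ the delayed argument $\tau=t+v/c$ ranges over a compact subinterval of $[0,T]$ on which the data $\boldsymbol{H}(\boldsymbol{x},\tau)\times\boldsymbol{\nu}(\boldsymbol{x})$ is $C^1$ by the remark following \eqref{Equality: Explicit expression for B  }. Denote the right-hand side of \eqref{vode_1} by $F(t,v):=cf_i(\boldsymbol{x},t)/[4\pi v\,H_i(\boldsymbol{x},t+v/c)]-c$. Away from zeros of $H_i$, the estimate $|F(t,v_1)-F(t,v_2)|\le L|v_1-v_2|$ on $K$ is a direct consequence of the quotient rule, the uniform lower bound $v\ge d_1/2$, and the $C^1$-regularity of $H_i$ in its time argument. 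Granting this Lipschitz bound, Picard--Lindel\"of produces a unique local $C^1$ solution satisfying $v(0)=cT(\boldsymbol{x})=|\boldsymbol{x}-\boldsymbol{a}(0)|$, and a standard continuation argument --- exploiting that any $C^1$ solution of \eqref{vode_1} must stay inside $K$ (otherwise $F$ blows up or the physical constraint $|v'|<c$ fails) --- extends local uniqueness to the whole interval $[0,T_0]$.

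The main obstacle is the potential vanishing of $H_i(\boldsymbol{x},t+v/c)$ at isolated instants, which the paper flags immediately after \eqref{vode}. The key observation is that the consistency identity \eqref{eq:2} evaluated at the true solution $v^*$ forces $f_i(\boldsymbol{x},t_0)$ to vanish simultaneously with $H_i(\boldsymbol{x},t_0+v^*(t_0)/c)$ and with the same order, so $F(t,v)$ admits a removable extension in a neighborhood of each such $(t_0,v^*(t_0))$ and the Lipschitz estimate persists across these points. An equivalent and perhaps cleaner route is to locally recast \eqref{vode_1} in its implicit, singularity-free form $4\pi v(c+v')H_i(\boldsymbol{x},t+v/c)=cf_i(\boldsymbol{x},t)$ and apply the implicit function theorem at each zero of $H_i$, recovering a Lipschitz ODE for $v$ there. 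Patching these local uniqueness statements across $[0,T_0]$ then closes the proof.
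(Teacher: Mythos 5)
Your core argument---Picard--Lindel\"of applied on a compact tube around the physical solution, with the Lipschitz constant coming from the $C^1$-regularity of $H_i$ in its time argument and the uniform lower bound on $v$---is the same as the paper's, which works on the domain $U=\{(t,v):\ t\in[0,T_0],\ v\in(d_1,d_0)\cap(cT(\boldsymbol x)-ct,+\infty)\}$ and invokes the Picard theorem after asserting that $F(t,v)$ is continuous on $U$ and $C^1$ in $v$. Your two additions (existence is automatic because $v^*(t)=|\boldsymbol x-\boldsymbol a(t)|$ already solves the problem; local uniqueness must be propagated by a continuation argument) are correct and are in fact more careful than the paper's one-line conclusion.

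The gap is in your treatment of the zeros of $H_i$, which you rightly identify as the delicate point. The identity \eqref{eq:2} gives $H_i(\boldsymbol x,g(t))=\frac{c}{4\pi v^*(t)(c+v^{*\prime}(t))}f_i(\boldsymbol x,t)$, so the cancellation between numerator and denominator of $F(t,v)=\frac{cf_i(\boldsymbol x,t)}{4\pi v\,H_i(\boldsymbol x,\,t+v/c)}-c$ occurs only \emph{on} the curve $v=v^*(t)$, where the delayed argument satisfies $t+v/c=g(t)$. Off that curve the two time arguments are decoupled: if $\tau_0=g(t_0)$ is an isolated zero of $H_i(\boldsymbol x,\cdot)$, the denominator vanishes along the entire line $\{t+v/c=\tau_0\}$ passing through $(t_0,v^*(t_0))$, while the numerator $f_i(\boldsymbol x,t)$ is nonzero there for $t\neq t_0$. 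Hence $F$ is unbounded in every two-dimensional neighborhood of $(t_0,v^*(t_0))$; the ``same order of vanishing'' controls $F$ only along the graph of $v^*$, which is not enough for a Lipschitz estimate in $v$. The implicit-form alternative degenerates at exactly the same points, since $\partial_{v'}\bigl[4\pi v(c+v')H_i(\boldsymbol x,t+v/c)\bigr]=4\pi v\,H_i(\boldsymbol x,t+v/c)=0$ there, so the implicit function theorem cannot be used to solve for $v'$. To close the argument one needs the additional hypothesis that the chosen component satisfies $f_i(\boldsymbol x,t)\neq0$ on $[0,T_0]$ (equivalently, that $H_i$ does not vanish on the relevant window)---which is what the paper tacitly assumes when it declares $F$ continuous on $U$, and which is consistent with its standing assumption $\inf_t\|\boldsymbol f(t)\times\boldsymbol\nu\|_\infty>0$ combined with switching components where necessary---rather than a removable-singularity argument.
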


\begin{proof}
Without loss of generality, we consider the case $i=1$.
Define the domain
\[
U=\{(t, v): t\in [0, T_0], \, v\in (d_1, d_0)\cap (cT(\boldsymbol x)-ct, +\infty)\}.
\]
For $\boldsymbol x\in\Gamma$, we define the right hand of \eqref{vode_1} as  $ F(t, v) := \frac{cf_1(\boldsymbol{x},t)}{4\pi vH_1(\boldsymbol x, t+v/c)} - c.$
By the smoothness of $\boldsymbol{a}$, the function $H_1$ is of $C^1$-smoothness with respect to the second variable and $f_1$ is of $C^1$-smoothness with respect to the time variable. Therefore, $F$ is continuous on $U$ and $C^1$-smooth with respect to $v$. This implies that $F(t, v)$ satisfies the Lipschitz condition with respect to the variable $v$, proving the well-posedness of the initial value problem \eqref{vode_1} by Picard theorem. The proof is complete.
\end{proof}

Second, we shall determine $\boldsymbol{a}(t)$ from $v(t, \boldsymbol{x}_j)$ ($j=1,2,3,4$),
where $\boldsymbol{x}_1, \boldsymbol{x}_2, \boldsymbol{x}_3, \boldsymbol{x}_4$ are four observation points on $\Gamma$ which do not lie on one  plane.
Denoting $\boldsymbol{a}(t)=\left(a_1(t), a_2(t), a_3(t)\right)$ and $\boldsymbol{x}_j=(x_j, y_j, z_j),\, j=1,2,3,4$. From the definition of $v(t;\boldsymbol{x})$, it follows that
\begin{equation}
\left\{
    \begin{array}{ll}
        v^2(t;\boldsymbol{x}_1) = \left( x_1 - a_1(t) \right)^2 + \left( y_1 - a_2(t) \right)^2 + \left( z_1 - a_3(t) \right)^2, \\[5pt]
        v^2(t;\boldsymbol{x}_2) = \left( x_2 - a_1(t) \right)^2 + \left( y_2 - a_2(t) \right)^2 + \left( z_2 - a_3(t) \right)^2, \\[5pt]
        v^2(t;\boldsymbol{x}_3) = \left( x_3 - a_1(t) \right)^2 + \left( y_3 - a_2(t) \right)^2 + \left( z_3 - a_3(t) \right)^2, \\[5pt]
        v^2(t;\boldsymbol{x}_4) = \left( x_4 - a_1(t) \right)^2 + \left( y_4 - a_2(t) \right)^2 + \left( z_4 - a_3(t) \right)^2.
    \end{array}
\right.
\end{equation}
Subtracting the above equations pairwisely and eliminating the quadratic terms, we obtain

\begin{equation}
\begin{bmatrix}
    v^2(t;\boldsymbol{x}_1) - v^2(t;\boldsymbol{x}_2) \\
    v^2(t;\boldsymbol{x}_2) - v^2(t;\boldsymbol{x}_3) \\
    v^2(t;\boldsymbol{x}_3) - v^2(t;\boldsymbol{x}_4)
\end{bmatrix} - X_1 = 2X_0
\begin{bmatrix}
    a_1(t) \\
    a_2(t) \\
    a_3(t)
\end{bmatrix},\notag
\end{equation}
where \begin{align*}
    X_1&=\begin{bmatrix}
    (x^2_1 - x^2_2) + (y^2_1 - y^2_2) + (z^2_1 - z^2_2) \\
    (x^2_2 - x^2_3) + (y^2_2 - y^2_3) + (z^2_2 - z^2_3) \\
    (x^2_3 - x^2_4) + (y^2_3 - y^2_4) + (z^2_3 - z^2_4)
\end{bmatrix}, \\ X_0&=\begin{bmatrix}
    x_1 - x_2 & y_1 - y_2 & z_1 - z_2 \\
    x_2 - x_3 & y_2 - y_3 & z_2 - z_3 \\
    x_3 - x_4 & y_3 - y_4 & z_3 - z_4
\end{bmatrix}.
\end{align*}
Since the four observation points are not coplanar, the determinant $|X_0| \neq 0$. Then we obtain the solution $\boldsymbol a(t)$ for every fixed $t$:
\begin{equation}
\begin{bmatrix}
    a_1(t) \\
    a_2(t) \\
    a_3(t)
\end{bmatrix} = \frac{1}{2}X_0^{-1}\begin{bmatrix}
    v^2(t;\boldsymbol{x}_1) - v^2(t;\boldsymbol{x}_2) \\
    v^2(t;\boldsymbol{x}_2) - v^2(t;\boldsymbol{x}_3) \\
    v^2(t;\boldsymbol{x}_3) - v^2(t;\boldsymbol{x}_4),
\end{bmatrix} - \frac{1}{2}X_0^{-1}X_1.\label{Equation: Linear system of equations satisfied by v  }
\end{equation}


We summarize the reconstruction method  as follows:
\begin{enumerate}
    \item Measure the tangential trace of the magnetic field $\boldsymbol{H}\times \boldsymbol \nu(\boldsymbol{x}, t)$ at four non-coplanar points $\boldsymbol{x}_1, \boldsymbol{x}_2, \boldsymbol{x}_3, \boldsymbol{x}_4$ 
        over the time period $[0, T]$.\\
    \item Solve the ordinary differential equation (\ref{vode_1}) to get the distance function $v(t;\boldsymbol{x}_j)$ for $t \in (0,T_0), \, j=1,2,3,4$.\\
    \item Solve the linear system (\ref{Equation: Linear system of equations satisfied by v  }) to obtain the orbit function $\boldsymbol{a}(t)$ for $t \in (0,T_0)$.
\end{enumerate}




\section{Stability for the reconstruction method}\label{stabi}

This section concerns the stability estimate of the orbit function from four boundary
measurements.
For a column vector function $\boldsymbol{h}(t)=(h_1(t), h_2(t), h_3(t))^{\top}$, define the $L^\infty$- norm of $\boldsymbol{h}$ over the interval $(0,T_0)$ by
\begin{equation}
\left\|\boldsymbol{h}(\cdot)\right\|_{L^\infty(0,T_0)} := \underset{\substack{1\leq i\leq 3 \\ 0\leq t\leq T_0}}{\sup}\left |h_i(t)\right | .\notag
\end{equation}
The infinity norm of
 a column vector $\boldsymbol y=(y_1,y_2,y_3)^{\top}\in \R^3$ is defined as
\begin{equation}
\left\|\boldsymbol{y}\right\|_\infty := \underset{1\leq i\leq 3}{\max}|y_i| .\notag
\end{equation}
Then we define the infinity norm of a $3\times 3$ matrix $M$ by
\begin{equation}
\left\|M\right\|_\infty := \underset{|\boldsymbol y|\not = 0}{\max}\frac{\left \|M\boldsymbol y \right \|_\infty}{\left \| \boldsymbol y \right \|_\infty} .\notag
\end{equation}

Let $\boldsymbol H$ and $\widetilde{\boldsymbol H}$ be the magnetic fields generated by the moving orbits $\boldsymbol a$ and $\tilde{\boldsymbol  a}$, respectively.
The reconstruction method consists of three steps: calculate the arrival time of the signal, determine the distance
function and reconstruct the orbit. In what follows, motivated by \cite{triki_ip}, we estimate $T(\boldsymbol x) - \widetilde{T}(\boldsymbol x)$,
$v(t; \boldsymbol x) - \tilde{v}(t; \boldsymbol x)$ and $\boldsymbol a(t) - \tilde{\boldsymbol  a}(t)$ from a single boundary measurement data at $\boldsymbol x\in \Gamma$, under the assumption that 
\begin{equation}
\eta:=\underset{0\leq t\leq T}{\inf}\|\boldsymbol{f}(t)\times \boldsymbol{\nu}(\boldsymbol{x})\|_\infty>0.
\end{equation}

\newtheorem{theo1}{Theorem}
\begin{theo1}[Stability estimate of the signal arrival time] \label{thm1}
We have the stability estimate
\begin{equation}
|T(\boldsymbol x) - \widetilde{T}(\boldsymbol{x})| \leq C_0 \| \boldsymbol H(\boldsymbol{x}, \cdot)\times \boldsymbol \nu - \widetilde{\boldsymbol H}(\boldsymbol{x}, \cdot) \times \boldsymbol \nu \|_{L^{\infty}(0,T)},
\label{Inequality: Stability estimate for t_x  }
\end{equation}
where 
\begin{equation}\label{C0}
C_0=\frac{12\pi d_0(c+c_0)T}{c\,\eta} . 
\end{equation}
\end{theo1}

\begin{proof} Without loss of generality, we assume $\widetilde{T}(\boldsymbol x) \leq T(\boldsymbol x)$ for some fixed $i=1,2,3$. Write $\widetilde{\boldsymbol H}\times \boldsymbol \nu =(\tilde{H}_1, \tilde{H}_2, \tilde{H}_3)$.
Then,
\begin{align*}
\int_{\widetilde{T}(\boldsymbol x)}^{T(\boldsymbol x)} |\tilde{H}_i(\boldsymbol{x}, s)| \, ds
&= \int_{\widetilde{T}(\boldsymbol x)}^{T(\boldsymbol x)} |\tilde{H}_i(\boldsymbol{x}, s) - H_i(\boldsymbol{x}, s)| \, ds \\
&\leq \int_{0}^{T} |\tilde{H}_i(\boldsymbol{x}, s) - H_i(\boldsymbol{x}, s)| \, ds \\
&\leq T \underset{0\leq t\leq T}{\sup}\left | \tilde{H}_i(\boldsymbol{x}, t) - H_i(\boldsymbol{x}, t) \right| \\
&\leq T \| \boldsymbol H(\boldsymbol x, \cdot)\times \boldsymbol \nu(\boldsymbol{x}) - \widetilde{\boldsymbol H}(\boldsymbol x,\cdot)\times \boldsymbol \nu (\boldsymbol{x}) \|_{L^{\infty}(0,T)} .
\end{align*}

Therefore,
\[
\int_{\widetilde{T}(\boldsymbol x)}^{T(\boldsymbol x)} \underset{1\leq i\leq 3}{\max}|\tilde{H}_i(\boldsymbol{x}, s)| \, ds\leq 3T \| \boldsymbol H(\boldsymbol x, \cdot)\times \boldsymbol \nu(\boldsymbol{x}) - \widetilde{\boldsymbol H}(\boldsymbol x, \cdot)\times \boldsymbol \nu (\boldsymbol{x}) \|_{L^{\infty}(0,T)} .
\]
Furthermore, for any $t >\widetilde{T}(\boldsymbol x)$, we have
\[
\widetilde{\boldsymbol H}(\boldsymbol{x},t)\times \boldsymbol\nu (\boldsymbol{x}) =\frac{1}{4\pi |\boldsymbol{x}-{\tilde{\boldsymbol a}}(\tilde{g}^{-1}(t))|} \boldsymbol{f}(\tilde{g}^{-1}(t))\times \boldsymbol\nu (\boldsymbol{x})\, \frac{1}{\tilde{g}^{'}(\tilde{g}^{-1}(t))}.
\]
Noting $0<\tilde{g}^{\prime}(s)=1+\frac{\tilde{\boldsymbol a}(s)\cdot(\boldsymbol{x}-\tilde{\boldsymbol a}(s))}{c|\boldsymbol{x}-\tilde{\boldsymbol a}(s)|}< 1+\frac{c_0}{c}$, we have
\begin{equation}
\underset{1\leq i\leq 3}{\max}|\tilde{H}_i(\boldsymbol{x},t)| \geq
\frac{c\underset{1\leq i\leq 3}{\max}|f_i(t)|}{4\pi d_0(c+c_0)} = \frac{c\|\boldsymbol{f}(t)\times \boldsymbol{\nu}(\boldsymbol{x})\|_\infty}{4\pi d_0(c+c_0)}, \notag
\end{equation}
where $d_0 := \underset{\boldsymbol{x}\in \Gamma,\boldsymbol{y}\in D}{\max}|\boldsymbol{x}-\boldsymbol{y}|$. Taking the infimum on both sides of the above inequality yields
\begin{equation}
\underset{0\leq t\leq T}{\inf}\underset{1\leq i\leq 3}{\max}|\tilde{H}_i(\boldsymbol{x},t)| \geq \frac{c\underset{0\leq t\leq T}{\inf}\|\boldsymbol{f}(t)\times \boldsymbol{\nu}(\boldsymbol{x})\|_\infty}{4\pi d_0(c+c_0)}. \notag
\end{equation}
Hence,
\begin{align*}
&\frac{c\underset{0\leq t\leq T}{\inf}\|\boldsymbol{f}(t)\times \boldsymbol{\nu}(\boldsymbol{x})\|_\infty}{4\pi d_0(c+c_0)}|T(\boldsymbol x) - \widetilde{T}(\boldsymbol x)| \\
&\leq
|T(\boldsymbol x) - \widetilde{T}(\boldsymbol x)|\underset{0\leq t\leq T}{\inf}\underset{1\leq i\leq 3}{\max}|\tilde{H}_i(\boldsymbol{x},t)|\\
&\leq \int_{\widetilde{T}(\boldsymbol x)}^{T(\boldsymbol x)} \underset{1\leq i\leq 3}{\max}|\tilde{H}_i(\boldsymbol{x}, s)| \, ds\\
&\leq 3T\| \boldsymbol H(\boldsymbol{x}, \cdot)\times \boldsymbol \nu - \widetilde{\boldsymbol H}(\boldsymbol{x}, \cdot)\times \boldsymbol\nu \|_{L^{\infty}(0,T)}.
\end{align*}
Finally, we obtain
\begin{equation}\label{eqn}
|T(\boldsymbol x) - \widetilde{T}(\boldsymbol x)| \leq C_0 \underset{0\leq t\leq T}{\sup}\left | \tilde{H}_i(\boldsymbol{x}, t) - H_i(\boldsymbol{x}, t) \right| ,
\end{equation}
where $C_0>0$ is given by \eqref{C0}.
\end{proof}

\newtheorem{theo2}[theo1]{Theorem}
\begin{theo2}[Stability estimate of the distance function]\label{thm2}
For $i = 1, 2, 3$, there holds the stability
\begin{equation}
\underset{0\leq t\leq T_0}{\sup}|v(t;\boldsymbol{x}) - \tilde{v}(t;\boldsymbol{x})|\leq C_1\| H_i(\boldsymbol{x}, \cdot) - \tilde{H}_i(\boldsymbol{x}, \cdot) \|_{L^{\infty}(0,T)},
\label{Inequality: Stability estimate for v^2  }
\end{equation}
where $C_1$ is a constant dependent on $\Omega,\,c,\,c_0,a_0,\,\boldsymbol{f},\, T$.
\end{theo2}

\begin{proof} 
Define the function \( f(\boldsymbol{x}, t) \) on the time interval \([0, T]\) as follows:
\[
f(\boldsymbol{x}, t) =
\begin{cases}
f_1(\boldsymbol{x}, t), & \text{if } |f_1(\boldsymbol{x}, t)| \geq |f_2(\boldsymbol{x}, t)| \text{ and } |f_1(\boldsymbol{x}, t)| \geq |f_3(\boldsymbol{x}, t)|, \\[5pt]
f_2(\boldsymbol{x}, t), & \text{if } |f_2(\boldsymbol{x}, t)| > |f_1(\boldsymbol{x}, t)| \text{ and } |f_2(\boldsymbol{x}, t)| \geq |f_3(\boldsymbol{x}, t)|, \\[5pt]
f_3(\boldsymbol{x}, t), & \text{if otherwise},
\end{cases}
\]
where $f_i$ ($i=1,2,3$) denotes the $i$-th component of the function $\boldsymbol{f}$.
Analogously, define \( H(\boldsymbol{x}, t) \) on the time interval \([0, T]\) by
\[
H(\boldsymbol{x}, t) =
\begin{cases}
H_1(\boldsymbol{x}, t), & \text{if } |H_1(\boldsymbol{x}, t)| \geq |H_2(\boldsymbol{x}, t)| \text{ and } |H_1(\boldsymbol{x}, t)| \geq |H_3(\boldsymbol{x}, t)|, \\[5pt]
H_2(\boldsymbol{x}, t), & \text{if } |H_2(\boldsymbol{x}, t)| > |H_1(\boldsymbol{x}, t)| \text{ and } |H_2(\boldsymbol{x}, t)| \geq |H_3(\boldsymbol{x}, t)|, \\[5pt]
H_3(\boldsymbol{x}, t), & \text{if otherwise}.
\end{cases}
\]
By \eqref{eq:2}, the component indexes corresponding to $H$ and $f$ are consistent for all $t\in(0, T_0)$, i.e., the relation $f=f_j$ for some $j\in{1,2,3}$ and $t>0$ implies that $H=H_j$ with the same sub-index $j$.
Let $\tilde{H}(\boldsymbol{x}, t) $ and $\tilde{v}(t; \boldsymbol{x})$ be defined as the same as $H(\boldsymbol{x}, t)$ and $v(t; \boldsymbol{x})$. 
According to \eqref{vode}, we have
$$
2v'(t;\boldsymbol{x})v(t;\boldsymbol{x}) = \frac{cf(\boldsymbol{x},t)}{2\pi  H(\boldsymbol{x}, t + v(t;\boldsymbol{x})/c)} - 2cv(t;\boldsymbol{x}), \quad t \in (0,T_0],
$$
and
$$
2\tilde v'(t;\boldsymbol{x})\tilde v(t;\boldsymbol{x}) = \frac{cf(\boldsymbol{x},t)}{2\pi  \tilde{H}(\boldsymbol{x}, t + \tilde v(t;\boldsymbol{x})/c)} - 2c\tilde v(t;\boldsymbol{x}), \quad t \in (0,T_0].
$$
Integrating these two equations from 0 to \( t\in(0,T_0] \) and then taking their difference, we obtain
\begin{align*}
v^2(t;\boldsymbol{x}) - \tilde{v}^2(t;\boldsymbol{x}) &=
c^2(T(\boldsymbol x)^2-\widetilde{T}(\boldsymbol x)^2)  \notag \\
&\quad+ 2c\int_0^t \left[v(s;\boldsymbol{x}) - \tilde{v}(s;\boldsymbol{x})\right] dt\notag\\
&\quad+ \frac{c}{2\pi}\int_0^t \left[\frac{f(\boldsymbol{x},s)}{H(\boldsymbol{x},s+v(s;\boldsymbol{x})/c)}-\frac{f(\boldsymbol{x},s)}{\tilde{H}(\boldsymbol{x},s+\tilde{v}(s;\boldsymbol{x})/c)}\right]ds .
\end{align*}
We estimate each term of the above expression as follows. Introduce $\xi(t) = |v(t;\boldsymbol{x}) - \tilde{v}(t;\boldsymbol{x})|$.
\begin{enumerate}
    \item From the definitions of $d_1$, we know that $$|v^2(t;\boldsymbol{x}) - \tilde{v}^2(t;\boldsymbol{x})| = |v(t;\boldsymbol{x}) - \tilde{v}(t;\boldsymbol{x})||v(t;\boldsymbol{x}) + \tilde{v}(t;\boldsymbol{x})|\geq 2d_1\xi(t).$$
\item From the definitions of $T_1$ and $d_0$, we know that $\max\{T(\boldsymbol x),\widetilde{T}(\boldsymbol x)\}\leq T_1=d_0/c$. Then in view of (\ref{Inequality: Stability estimate for t_x  }),

\begin{align*}
|T(\boldsymbol x)^2-\widetilde{T}(\boldsymbol x)^2|&=|(T(\boldsymbol x)+\widetilde{T}(\boldsymbol x))(T(\boldsymbol x)-\widetilde{T}(\boldsymbol x))|  \notag \\
&\leq \frac{2d_0}{c} C_0 \| \boldsymbol H(\boldsymbol{x}, \cdot)\times \boldsymbol \nu - \widetilde{\boldsymbol H}(\boldsymbol{x}, \cdot)\times \boldsymbol \nu \|_{L^{\infty}(0,T)}.
\end{align*}
    \item By the definition of $\xi(t)$, we know
    $$|\int_0^t \left[v(s;\boldsymbol{x}) - \tilde{v}(s;\boldsymbol{x})\right] dt|\leq \int_0^t \xi(t)dt.$$
    \item By the triangle inequality,
   \begin{align}\nonumber
    &\quad \left|\int_0^t \frac{f(\boldsymbol{x},s)}{H(\boldsymbol{x},s+v(s;\boldsymbol{x})/c)} -\frac{f(\boldsymbol{x},s)}{\tilde{H}(\boldsymbol{x},s+\tilde{v}(s;\boldsymbol{x})/c)} ds \right| \\ \nonumber
    &\leq \int_0^t \left| \frac{f(\boldsymbol{x},s)}{H(\boldsymbol{x},s+v(s;\boldsymbol{x})/c)}-\frac{f(\boldsymbol{x},s)}{\tilde{H}(\boldsymbol{x},s+\tilde{v}(s;\boldsymbol{x})/c)}\right|ds \\ \nonumber
    &\leq \int_0^t |f(\boldsymbol{x},s)|\left| \frac{\tilde{H}(\boldsymbol{x},s+\tilde{v}(s;\boldsymbol{x})/c)-H(\boldsymbol{x},s+\tilde{v}(s;\boldsymbol{x})/c)}{H(\boldsymbol{x},s+v(s;\boldsymbol{x})/c)\tilde{H}(\boldsymbol{x},s+\tilde{v}(s;\boldsymbol{x})/c)} \right|ds \\ \label{eq:1}
    &\quad + \int_0^t |f(\boldsymbol{x},s)|\left| \frac{H(\boldsymbol{x},s+\tilde{v}(s;\boldsymbol{x})/c)-H(\boldsymbol{x},s+v(s;\boldsymbol{x})/c)}{H(\boldsymbol{x},s+v(s;\boldsymbol{x})/c)\tilde{H}(\boldsymbol{x},s+\tilde{v}(s;\boldsymbol{x})/c)} \right|ds.
    \end{align}
    According to (\ref{Equality: Explicit expression for B  }), for any $t>0$, we have
    \begin{equation*}
    |H(\boldsymbol{x},t+v(t;\boldsymbol{x})/c)|=|H(\boldsymbol{x},g(t))| = \frac{1}{4\pi v(t,\boldsymbol{x})} \frac{|f(\boldsymbol{x}, t)|}{g^{'}(t)} \geq \frac{c|f(\boldsymbol{x}, t)|}{4\pi d_0(c+c_0)}.
    \end{equation*}
    Therefore, the first integral term on the right hand side of \eqref{eq:1} can be bounded by
    \begin{align*}
&\quad \int_0^t |f(\boldsymbol{x},s)|\left| \frac{\tilde{H}(\boldsymbol{x},s+\tilde{v}(s;\boldsymbol{x})/c)-H(\boldsymbol{x},s+\tilde{v}(s;\boldsymbol{x})/c)}{H(\boldsymbol{x},s+v(s;\boldsymbol{x})/c)\tilde{H}(\boldsymbol{x},s+\tilde{v}(s;\boldsymbol{x})/c)} \right|ds \notag \\
&\leq \int_0^t \frac{16\pi^2d_0^2(c+c_0)^2}{c^2} \frac{|\tilde{H}(\boldsymbol{x},s+\tilde{v}(s;\boldsymbol{x})/c)-H(\boldsymbol{x},s+v(s;\boldsymbol{x})/c)|}{|f(\boldsymbol{x},s)|}ds \notag \\
&\leq \frac{16\pi^2d_0^2(c+c_0)^2T}{c^2\underset{0\leq t\leq T}{\inf}\|\boldsymbol{f}(\boldsymbol{x},t)\|_\infty} \underset{0<t\leq T}{\sup}| H(\boldsymbol{x}, t) - \tilde{H}(\boldsymbol{x}, t) |\\
&= \frac{16\pi^2d_0^2(c+c_0)^2T}{c^2\underset{0\leq t\leq T}{\inf}\|\boldsymbol{f}(t)\times \boldsymbol{\nu}(\boldsymbol{x})\|_\infty}\| \boldsymbol{H}(\boldsymbol{x}, \cdot)\times\boldsymbol{\nu} - \widetilde{\boldsymbol H}(\boldsymbol{x}, \cdot)\times \boldsymbol \nu \|_{L^{\infty}(0,T)}.
\end{align*}
Then, by taking the cross product of both sides of Equation \eqref{Equality: Explicit expression for B  } with the outward normal vector \(\boldsymbol{\nu}\) and subsequently differentiating with respect to $t$, we obtain the following upper bound estimate:
\begin{align*}
    \frac{4\pi}{c}\|(\boldsymbol{H}\times\boldsymbol{\nu})'(\boldsymbol{x},t)\|_\infty
    &\leq \left\|\frac{c(\boldsymbol{f}\times\boldsymbol{\nu})'(\boldsymbol{x},g^{-1}(t))}{v(g^{-1}(t);\boldsymbol{x})\left(c+v^{'}(g^{-1}(t);\boldsymbol{x})\right)^2}\right\|_\infty\\
    &\quad + \left\|\frac{cv'(g^{-1}(t);\boldsymbol{x})(\boldsymbol{f}\times\boldsymbol{\nu})(\boldsymbol{x},g^{-1}(t))}{v^2(g^{-1}(t);\boldsymbol{x})\left(c+v^{'}(g^{-1}(t);\boldsymbol{x})\right)^2}\right\|_\infty\\
    &\quad + \left\|\frac{cv{''}(g^{-1}(t);\boldsymbol{x})(\boldsymbol{f}\times\boldsymbol{\nu})(\boldsymbol{x},g^{-1}(t))}{v(g^{-1}(t);\boldsymbol{x})\left(c+v^{'}(g^{-1}(t);\boldsymbol{x})\right)^3}\right\|_\infty\\
    &\leq \frac{c}{d_1(c-c_0)^2}\underset{0\leq t\leq T}{\sup}\|(\boldsymbol{f}\times\boldsymbol{\nu})'(\boldsymbol{x},t)\|_\infty\\
    &\quad +\frac{cc_0}{d_1^2(c-c_0)^2}\underset{0\leq t\leq T}{\sup}\|(\boldsymbol{f}\times\boldsymbol{\nu})(\boldsymbol{x},t)\|_\infty\\
    &\quad + \frac{ca_0}{d_1(c-c_0)^3}\underset{0\leq t\leq T}{\sup}\|(\boldsymbol{f}\times\boldsymbol{\nu})(\boldsymbol{x},t)\|_\infty.
\end{align*}
Then we can obtain an upper bound estimate for the second integral term :
\begin{align*}
    &\quad \int_0^t |f(\boldsymbol{x},s)|\left| \frac{H(\boldsymbol{x},s+\tilde{v}(s;\boldsymbol{x})/c)-H(\boldsymbol{x},s+v(s;\boldsymbol{x})/c)}{H(\boldsymbol{x},s+v(s;\boldsymbol{x})/c)\tilde{H}(\boldsymbol{x},s+\tilde{v}(s;\boldsymbol{x})/c)} \right|ds \\
    &\leq \int_0^t \frac{16\pi^2d_0^2(c+c_0)^2}{c^2} \frac{| H(\boldsymbol{x},s+\tilde{v}(s;\boldsymbol{x})/c)-H(\boldsymbol{x},s+v(s;\boldsymbol{x})/c)|}{|f(\boldsymbol{x},s)|}ds \\
    &\leq \frac{2\underset{0\leq t\leq T}{\sup}\|(\boldsymbol{H}\times\boldsymbol{\nu})'(\boldsymbol{x},t)\|_\infty}
    {\underset{0\leq t\leq T}{\inf}\|(\boldsymbol{f}\times\boldsymbol{\nu})(\boldsymbol{x},t)\|_\infty}
    \frac{16\pi^2d_0^2(c+c_0)^2}{c^3} \int_0^t |\tilde{v}(s;\boldsymbol{x})-v(s;\boldsymbol{x})|ds\\
    &\leq \left(\frac{8\pi d_0^2(c+c_0)^2\underset{0\leq t\leq T}{\sup}\|(\boldsymbol{f}\times\boldsymbol{\nu})(\boldsymbol{x},t)\|_\infty}
    {d_1c(c-c_0)^2\underset{0\leq t\leq T}{\inf}\|(\boldsymbol{f}\times\boldsymbol{\nu})(\boldsymbol{x},t)\|_\infty}\right)\times \\
    & \quad\left( \frac{\underset{0\leq t\leq T}{\sup}\|(\boldsymbol{f}\times\boldsymbol{\nu})'(\boldsymbol{x},t)\|_\infty}{\underset{0\leq t\leq T}{\sup}\|(\boldsymbol{f}\times\boldsymbol{\nu})(\boldsymbol{x},t)\|_\infty} + (\frac{c_0}{d_1}+\frac{a_0}{c-c_0})\right) \int_0^t\xi(s)ds.
    \end{align*}
    Thus, there exist constants \( A_1 \) and \( A_2 \) such that
    \begin{align*}
    &\quad \left|\int_0^t \frac{f(\boldsymbol{x},s)}{H(\boldsymbol{x},s+v(s;\boldsymbol{x})/c)} -\frac{f(\boldsymbol{x},s)}{\tilde{H}(\boldsymbol{x},s+\tilde{v}(s;\boldsymbol{x})/c)} ds \right| \\
    &\quad \leq A_1\| \boldsymbol{H}(\boldsymbol{x}, \cdot)\times\boldsymbol{\nu} - \widetilde{\boldsymbol H}(\boldsymbol{x}, \cdot)\times \boldsymbol \nu\|_{L^{\infty}(0,T)} + A_2\int_0^t\xi(s)ds.
    \end{align*}
\end{enumerate}
Combining the above discussions, we can obtain the inequality
$$
\xi(t)\leq B_1\| \boldsymbol{H}(\boldsymbol{x}, \cdot)\times\boldsymbol{\nu} - \widetilde{\boldsymbol H}(\boldsymbol{x}, \cdot)\times \boldsymbol \nu\|_{L^{\infty}(0,T)}+B_2\int_0^t\xi(s)ds.
$$
Here, \( B_1 \) and \( B_2 \) are constants depending on \( \Omega \), \( c \),\( c_0 \),\( a_0 \), \( \boldsymbol{f} \) and \( T \). By applying the Gronwall's inequality we obtain
\begin{align*}
\xi(t)\leq \left( 1+B_2T  e^{B_2T}\right)  B_1\| \boldsymbol{H}(\boldsymbol{x}, \cdot)\times\boldsymbol{\nu} - \widetilde{\boldsymbol H}(\boldsymbol{x}, \cdot)\times \boldsymbol \nu\|_{L^{\infty}(0,T)},
\end{align*}which leads to the error estimate
\[
    |v(t;\boldsymbol{x}) - \tilde{v}(t;\boldsymbol{x})|  \\
    \leq C_1 \| \boldsymbol{H}(\boldsymbol{x}, \cdot)\times\boldsymbol{\nu} - \widetilde{\boldsymbol H}(\boldsymbol{x}, \cdot)\times \boldsymbol \nu\|_{L^{\infty}(0,T)} ,
\]
where \( C_1 \) depends on \( \Omega \), \( c \), \( c_0 \), \( a_0 \), \( \boldsymbol{f} \) and \( T \).

In addition, we also have the upper bound
\begin{align*}
    |v^2(t;\boldsymbol{x}) - \tilde{v}^2(t;\boldsymbol{x})| &\leq 2d_0 |v(t;\boldsymbol{x}) - \tilde{v}(t;\boldsymbol{x})| \\
    &\leq 2d_0 C_1\, \| \boldsymbol{H}(\boldsymbol{x}, \cdot)\times\boldsymbol{\nu} - \widetilde{\boldsymbol H}(\boldsymbol{x}, \cdot)\times \boldsymbol \nu\|_{L^{\infty}(0,T)} .
\end{align*}
\end{proof}

With the help of Theorems \ref{thm1} and \ref{thm2}, we obtain the stability for the reconstruction of the orbit function, which also implies uniqueness
of the inverse problem.
\newtheorem{theo4}[theo1]{Theorem}
\begin{theo4}[Stability estimate for the orbit function \texorpdfstring{$\boldsymbol{a}$}{b}]\label{thm}
Let $\boldsymbol{x_1},\,\boldsymbol{x_2},\,\boldsymbol{x_3},\,\boldsymbol{x_4}$ be four non-coplanar points on $\Gamma$.
It holds that
\begin{equation}\label{stability}
\left\|\boldsymbol{a}(\cdot)-\tilde{\boldsymbol{a}}(\cdot) \right\|_{L^\infty(0,T_0)}\leq C_2 \| \boldsymbol{H}(\boldsymbol{x}, \cdot)\times\boldsymbol{\nu} - \widetilde{\boldsymbol H}(\boldsymbol{x}, \cdot)\times \boldsymbol \nu\|_{L^{\infty}(0,T)},
\end{equation}
where $C_2$ is a constant dependent on $\Omega,\,c,\,c_0, a_0,\,\boldsymbol{f},\,T$ and $\boldsymbol{x_j}$,$j=1,2,3,4$.
\end{theo4}

\begin{proof}

From (\ref{Equation: Linear system of equations satisfied by v  }), we have that for any fixed $t\in(0,T_0)$,
\begin{align}
\begin{bmatrix}
    a_1(t)-\tilde{a}_1(t) \\
    a_2(t)-\tilde{a}_2(t) \\
    a_3(t)-\tilde{a}_3(t)
\end{bmatrix}&=\frac{1}{2}X_0^{-1}\left(\begin{bmatrix}
    v^2(t;\boldsymbol{x}_1)-v^2(t;\boldsymbol{x}_2)  \\
    v^2(t;\boldsymbol{x}_2)-v^2(t;\boldsymbol{x}_3)  \\
    v^2(t;\boldsymbol{x}_3)-v^2(t;\boldsymbol{x}_4)
\end{bmatrix}-\begin{bmatrix}
    \tilde{v}^2(t;\boldsymbol{x}_1)-\tilde{v}^2(t;\boldsymbol{x}_2)  \\
    \tilde{v}^2(t;\boldsymbol{x}_2)-\tilde{v}^2(t;\boldsymbol{x}_3)  \\
    \tilde{v}^2(t;\boldsymbol{x}_3)-\tilde{v}^2(t;\boldsymbol{x}_4)
\end{bmatrix}\right) \notag \\
&=\frac{1}{2}X_0^{-1}\left(\begin{bmatrix}
    v^2(t;\boldsymbol{x}_1)-\tilde{v}^2(t;\boldsymbol{x}_1)  \\
    v^2(t;\boldsymbol{x}_2)-\tilde{v}^2(t;\boldsymbol{x}_2)  \\
    v^2(t;\boldsymbol{x}_3)-\tilde{v}^2(t;\boldsymbol{x}_3)
\end{bmatrix}-\begin{bmatrix}
    v^2(t;\boldsymbol{x}_2)-\tilde{v}^2(t; \boldsymbol{x}_2)  \\
    v^2(t;\boldsymbol{x}_3)-\tilde{v}^2(t; \boldsymbol{x}_3)  \\
    v^2(t;\boldsymbol{x}_4)-\tilde{v}^2(t;\boldsymbol{x}_4)
\end{bmatrix}\right) . \notag
\end{align}
Using the triangle inequality and the definition of matrix norms, we get
\begin{align}
\left\|\begin{bmatrix}
    a_1(t)-\tilde{a}_1(t) \\
    a_2(t)-\tilde{a}_2(t) \\
    a_3(t)-\tilde{a}_3(t)
\end{bmatrix}\right\|_\infty\leq&\frac{1}{2}\left\|X_0^{-1}\right\|_\infty \left\|\begin{bmatrix}
    v^2(t;\boldsymbol{x}_1)-\tilde{v}^2(t;\boldsymbol{x}_1)  \\
    v^2(t;\boldsymbol{x}_2)-\tilde{v}^2(t;\boldsymbol{x}_2)  \\
    v^2(t;\boldsymbol{x}_3)-\tilde{v}^2(t;\boldsymbol{x}_3)
\end{bmatrix}\right\|_\infty \notag\\
&\quad+\frac{1}{2}\left\|X_0^{-1}\right\|_\infty\left\|\begin{bmatrix}
    v^2(t;\boldsymbol{x}_2)-\tilde{v}^2(t;\boldsymbol{x}_2)  \\
    v^2(t;\boldsymbol{x}_3)-\tilde{v}^2(t;\boldsymbol{x}_3)  \\
    v^2(t\boldsymbol{x}_4)-\tilde{v}^2(t;\boldsymbol{x}_4)
\end{bmatrix}\right\|_\infty .\notag
\end{align}
By (\ref{Inequality: Stability estimate for v^2  }), the right-hand side of the previous inequality can be bounded by
\begin{align}&
\frac{1}{2}\left\|X_0^{-1}\right\|_\infty \left(\left\|\begin{bmatrix}
    v^2(t;\boldsymbol{x}_1)-\tilde{v}^2(t;\boldsymbol{x}_1)  \\
    v^2(t;\boldsymbol{x}_2)-\tilde{v}^2(t;\boldsymbol{x}_2)  \\
    v^2(t;\boldsymbol{x}_3)-\tilde{v}^2(t;\boldsymbol{x}_3)
\end{bmatrix}\right\|_\infty+\left\|\begin{bmatrix}
    v^2(t;\boldsymbol{x}_2)-\tilde{v}^2(t;\boldsymbol{x}_2)  \\
    v^2(t;\boldsymbol{x}_3)-\tilde{v}^2(t;\boldsymbol{x}_3)  \\
    v^2(t;\boldsymbol{x}_4)-\tilde{v}^2(t;\boldsymbol{x}_4)
\end{bmatrix}\right\|_\infty\right) \notag\\
&\quad\quad\leq 2d_0\left\|X_0^{-1}\right\|_\infty
C_1 \| \boldsymbol{H}(\boldsymbol{x}, \cdot)\times\boldsymbol{\nu} - \widetilde{\boldsymbol H}(\boldsymbol{x}, \cdot)\times \boldsymbol \nu\|_{L^{\infty}(0,T)}.\notag
\end{align}
Therefore,
\begin{align}
\left\|\boldsymbol{a}(\cdot)-\tilde{\boldsymbol{a}}(\cdot)\right\|_{L^{\infty}(0,T_0)} \leq C_2 \| \boldsymbol{H}(\boldsymbol{x}, \cdot)\times\boldsymbol{\nu} - \widetilde{\boldsymbol H}(\boldsymbol{x}, \cdot)\times \boldsymbol \nu\|_{L^{\infty}(0,T)}, \notag
\end{align}
where
\begin{equation}
C_2=2d_0\left\|X_0^{-1}\right\|_\infty C_1. \notag
\end{equation}
The proof is complete.
\end{proof}
\begin{corollary}[Uniqueness]\label{thm}
Let $\boldsymbol{x_1},\,\boldsymbol{x_2},\,\boldsymbol{x_3},\,\boldsymbol{x_4}\in \G$ be four non-coplanar points. The orbit function $\boldsymbol{a}(t)$ for $t\in (0, T_0)$ can be uniquely determined by the measurement data $\nu(\boldsymbol{x}_j) \times \boldsymbol{H}(\boldsymbol{x}_j, t)$ for $j=1,2,3,4$ and $t\in(0, T_0+T_1)$ where $T_1>T_0$ is given by \eqref{T1}.

\end{corollary}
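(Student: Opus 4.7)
The plan is to derive the uniqueness as a direct consequence of the Lipschitz stability estimate for the orbit function (the theorem immediately preceding this corollary). Suppose two admissible orbits $\boldsymbol{a}$ and $\tilde{\boldsymbol{a}}$ (each satisfying the smoothness and kinematic hypotheses (1)--(3) stated in the introduction) generate identical tangential magnetic traces at every one of the four observation points, i.e.
$$
\boldsymbol{H}(\boldsymbol{x}_j, t) \times \boldsymbol{\nu}(\boldsymbol{x}_j) = \widetilde{\boldsymbol{H}}(\boldsymbol{x}_j, t) \times \boldsymbol{\nu}(\boldsymbol{x}_j), \qquad j=1,2,3,4,\ t \in (0, T_0+T_1).
$$
The goal is to conclude $\boldsymbol{a}(t) = \tilde{\boldsymbol{a}}(t)$ for every $t \in (0, T_0)$.

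The core step is to apply the stability estimate with $T = T_0 + T_1$, which is exactly the measurement window stipulated in the corollary. Under the assumption above, the tangential trace difference at each of the four receivers is identically zero on $(0, T)$, so the right-hand side of the stability inequality vanishes. The left-hand side then satisfies $\|\boldsymbol{a}(\cdot) - \tilde{\boldsymbol{a}}(\cdot)\|_{L^{\infty}(0,T_0)} \leq 0$, forcing the two orbits to agree on the whole interval $(0, T_0)$.

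There is essentially no further analytical obstacle; the substantive work is already packaged in the chain of estimates (arrival time, then distance function, then orbit). What must nevertheless be checked is that the constant $C_2$ appearing in the preceding theorem is finite under the present hypotheses. This is immediate: the non-coplanarity of $\boldsymbol{x}_1, \boldsymbol{x}_2, \boldsymbol{x}_3, \boldsymbol{x}_4$ implies $\det X_0 \neq 0$ and hence $\|X_0^{-1}\|_{\infty} < \infty$, while the standing assumption $\eta > 0$ secures the finiteness of the upstream constants $C_0$ and $C_1$. Only the qualitative implication ``vanishing data implies vanishing orbit error'' is invoked, so no quantitative tracking of $C_2$ is necessary for the proof.
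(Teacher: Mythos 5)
Your proposal is correct and matches the paper's own reasoning: the paper offers no separate argument for the corollary beyond the remark that the Lipschitz stability of Theorem \ref{thm} ``also implies uniqueness,'' which is precisely the specialization to vanishing data difference that you carry out. Your additional check that $C_2$ is finite (via $\det X_0 \neq 0$ and $\eta > 0$) is a sensible, if implicit-in-the-paper, detail.
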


\section{Numerical experiments}\label{numerics}

In this section, we present a couple of numerical experiments to demonstrate the effectiveness and feasibility of the proposed reconstruction method.
All numerical experiments are implemented by MATLAB.
We obtain the synthetic data for the solution of forward problem by the representation:
\[
\boldsymbol H(\boldsymbol x, t)=
\boldsymbol \He(t-|\boldsymbol{x}-\boldsymbol{a}(0)|/c)\frac{\boldsymbol{f}(g^{-1}(t))}{4\pi g^\prime(g^{-1}(t)) |\boldsymbol{x}-\boldsymbol{a}(g^{-1}(t))|},
\]
where
\begin{equation*}
g(t)=t+\frac{|\boldsymbol x -\boldsymbol a(t)|}{c},\quad g'(t)=1+\frac{\boldsymbol a(t)\cdot(\boldsymbol x -\boldsymbol a(t))}{c|\boldsymbol x -\boldsymbol a(t)|}.
\end{equation*}
Given $\boldsymbol f$ and $\boldsymbol a$,
the synthetic data can be obtained once we have $g^{-1}$. Since the function $g^{-1}$ may not be explicit,
we adopt the method of dichotomy to compute $g^{-1}$ (see Algorithm \ref{A1}, \eqref{alg:inverse_function_solving_algorithm}), based on the monotonicity and continuity of $g$. To test the stability of our method with respect to noise, we pollute the synthetic data by
\[
\tilde{\boldsymbol H}(\boldsymbol x, t) = \boldsymbol H(\boldsymbol x, t)(1+2\varepsilon\,{\rm rand} - \varepsilon),
\]
where $\varepsilon>0$ denotes the noise level and {\rm rand} is an independently and uniformly distributed random number in $[0, 1]$. We employ the relative error \text{Err} to measure the reconstructed results:
\[
\text{Err}:=\frac{\|\tilde{\boldsymbol{a}}(\cdot)-{\boldsymbol{a}(\cdot)\|_{L^\infty(0,T_0)}}}{\|{\boldsymbol{a}(\cdot)\|_{L^\infty(0,T_0)}}},
\]
where \( \boldsymbol{a} \) represents the original orbit, and \( \tilde{\boldsymbol{a}} \) represents the reconstructed orbit.

\begin{algorithm}[H]\label{A1}
    \DontPrintSemicolon
    \SetAlgoLined
    \label{alg:inverse_function_solving_algorithm}
    \KwIn{Function $g$, target value $r$, precision $\varepsilon_{inv} = 10^{-7}$}
    \KwOut{Inverse function $g^{-1}(r)$}
    $t \gets 0$ \;
    $\Delta t \gets 1$\;
    \While{$\Delta t > \varepsilon_{inv}$}{
        \uIf{$t > T$ or $ g(t) > r$}{
            $t \gets t - \Delta t$\;

            $\Delta t \gets \Delta t/10$\;
        }
        \Else{
            $t \gets t + \Delta t$\;
        }
    }
    $t \gets t + \Delta t*10$\;
    \Return{$t$}\;
    \caption{Inverse function of $g(t)$}
\end{algorithm}

\subsection{Numerical examples with a high wave speed} In this section the wave speed $c$ is chosen to be the speed of light in a vacuum, which is $c=3\times {10}^8$.
 We present three numerical experiments 1-3 to recover a straight line, a hear-shaped closed curve and a spiral curve in three dimensions. The four boundary observation points are located at
\begin{align}
\boldsymbol{x}_1=\frac{20000}{\sqrt{3}}\begin{bmatrix}
    1 \\
    1 \\
    1
\end{bmatrix},\,
\boldsymbol{x}_2=\frac{20000}{\sqrt{3}}\begin{bmatrix}
    -1 \\
    -1 \\
    1
\end{bmatrix},\, \notag \\
\boldsymbol{x}_3=\frac{20000}{\sqrt{3}}\begin{bmatrix}
    1 \\
    -1 \\
    -1
\end{bmatrix},\,
\boldsymbol{x}_4=\frac{20000}{\sqrt{3}}\begin{bmatrix}
    -1 \\
    1 \\
    -1
\end{bmatrix}. \notag
\end{align}
All of them lie on the sphere $\Omega$ centered at the origin with radius 20000. In our examples, the orbits \( \boldsymbol a \) are contained in the region \( \Omega \). The vector-valued temporal function $\boldsymbol f $ is given by $$\boldsymbol f(t)= \begin{bmatrix}
    1 \\
    15 + 10\sin(100 t) \\
    -1-t^2
\end{bmatrix}. $$  The time interval length $T_0$ of the orbit is $ T_0=2\pi \times 10^{-2}$ and the additional
measurement time $T_1 $ is $T_1= \frac{40000}{c}$.

We compute $T(\boldsymbol x_j), j = 1, 2, 3, 4,$ defined by (\ref{Equality: Definition of t_x }) by choosing components of the magnetic field as follows:
\begin{align*}
    T(\boldsymbol x_1)&=\varepsilon_{ini}\inf\{k\in \mathbb{N}: H_1(\boldsymbol{x}_1,k\varepsilon_{ini})\neq 0\}, T(\boldsymbol x_2)=\varepsilon_{ini}\inf\{k\in \mathbb{N}: H_2(\boldsymbol{x}_2,k\varepsilon_{ini})\neq 0\},\\
   T(\boldsymbol x_3)&=\varepsilon_{ini}\inf\{k\in \mathbb{N}: H_3(\boldsymbol{x}_3,k\varepsilon_{ini})\neq 0\}, T(\boldsymbol x_4)=\varepsilon_{ini}\inf\{k\in \mathbb{N}: H_3(\boldsymbol{x}_4,k\varepsilon_{ini})\neq 0\}.
\end{align*}
where $\varepsilon_{ini} = 10^{-7}$.
Then the following initial value problem is solved by using the fourth-order Runge-Kutta method with a step size of $10^{-5}$: 
\begin{align*}
\begin{cases}
        v'(t;\boldsymbol{x}_j) = \frac{c}{4\pi v(t;\boldsymbol{x}) H_i(\boldsymbol{x}, t + v(t;\boldsymbol{x})/c)} f_i(t) - c, & t \in (0,T_0],\\[5pt]
        v(0;\boldsymbol{x}_j) = c T(\boldsymbol x_j),
 \end{cases}
\end{align*}
where $i=1, 2, 3,$ and $j=1, 2, 3, 4.$
We use the measurement data $H_1(\boldsymbol{x}_1,t)$, $H_2(\boldsymbol{x}_2,t)$ to compute $v(t;\boldsymbol{x}_1)$ and $v(t;\boldsymbol{x}_2)$, respectively, and use
the data $H_3(\boldsymbol{x}_j,t), j=3, 4,$ to compute $v(t;\boldsymbol{x}_3)$ and $v(t;\boldsymbol{x}_4)$, where $H_j(\boldsymbol x, t)$ denotes the $j$-th component of $\boldsymbol{H}(\boldsymbol x, t)\times\boldsymbol\nu(\boldsymbol x)$.

Having computed the distance functions $|\boldsymbol{x}_j-a(t)|$ at the four points, one can determine the moving orbit by directly solving the system of equations (\ref{Equation: Linear system of equations satisfied by v  }).

\textbf{Example 1.} Suppose that the point source moves along the straight line
$$\boldsymbol{a}(t)=\begin{bmatrix}
    1000t \\
    0 \\
    0
\end{bmatrix},\quad t\in (0,T_0),$$
with the constant velocity $\boldsymbol{a}'(t)=1000$.
We solve the inverse problem by using the data measured at six different noise levels $\varepsilon = 0, 10^{-4}, 2\times 10^{-4},  3\times 10^{-4},  4\times 10^{-4},  5\times 10^{-3}.$
Figure \ref{figure:The reconstructed line orbits under varying levels of noise} shows the numerical results, where the reconstructed moving orbit (blue line) is plotted
against the exact orbit (red line) for each noise level. It is clear to see that better reconstructions can be obtained at smaller noise levels. We are particularly interested in the relation between the relative error and the noise level.   
Table \ref{tab:The variation of relative error for the line orbit} presents the relative errors of the reconstructions against different noise levels.
It can be observed that the relative errors exhibit a linear increase trend with respect to the noise level $\varepsilon$ from Figure \ref{figure:The relative errors for the reconstruction of a straight moving orbit}, which can be explained from the Lipschitz stability estimate shown in \ref{thm}. However, it remains open how does the stability constant on the wave speed. 
As seen from Figure \ref{figure:The reconstructed line orbits under varying levels of noise}, our method does not give a satisfactory reconstruction from the polluted data at the noisy level $\epsilon=0.5\%$ for electromagnetic waves. It will be shown later that good reconstructions from high-noise level data can still be achieved if the wave speed is lower.

\begin{figure}
\centering
\includegraphics[width=1\textwidth]{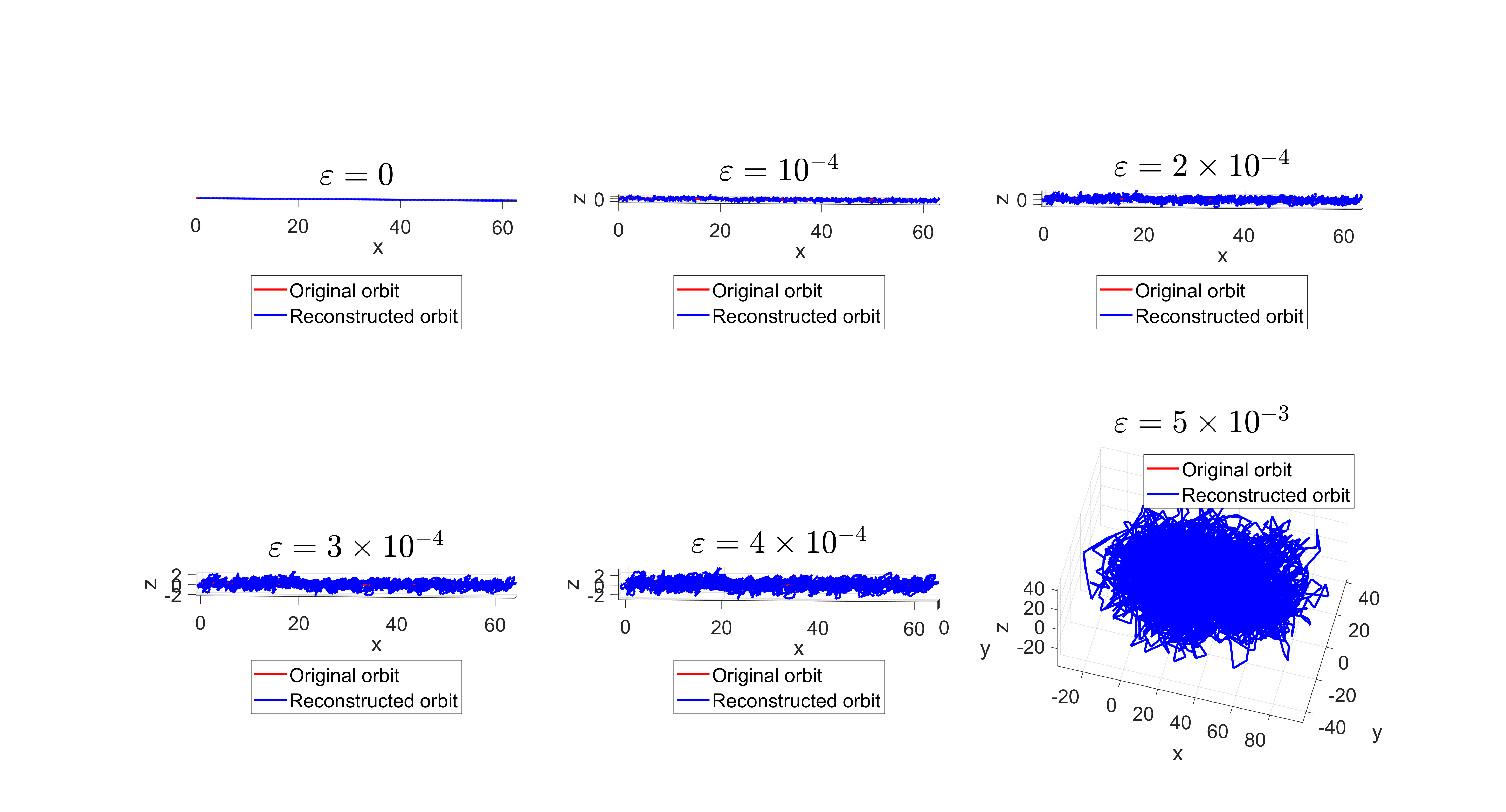}
\caption{Reconstructed orbit (blue line) against exact orbit (red line)  at different noise levels.}
\label{figure:The reconstructed line orbits under varying levels of noise}
\end{figure}

\begin{figure}
\centering
\includegraphics[width=0.8\textwidth]{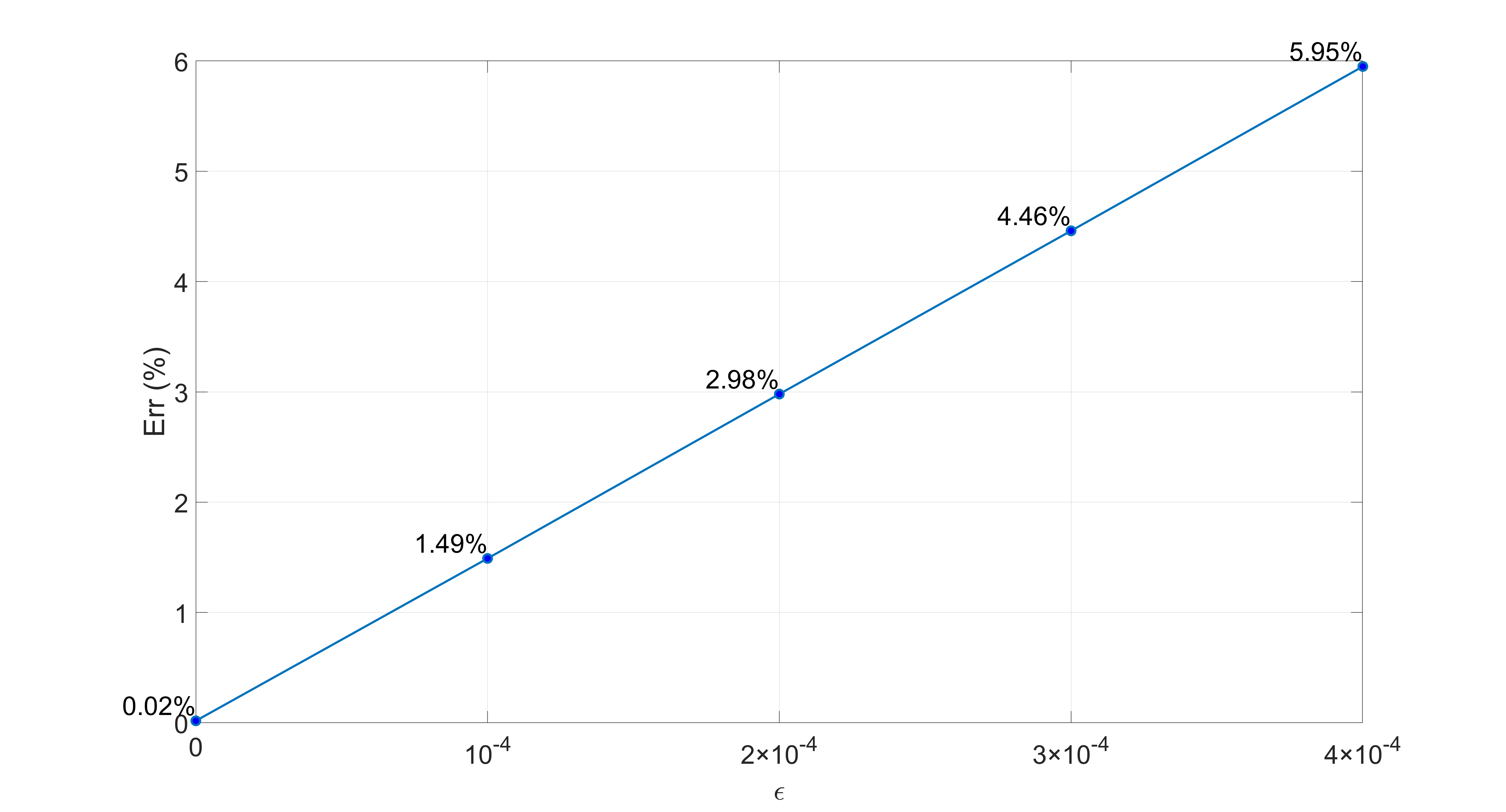}
\caption{Relative errors for the reconstruction of a straightly moving orbit at different noise levels. The $x$-axis represents the noise level $\varepsilon$, while the $y$-axis indicates the relative error Err.}
\label{figure:The relative errors for the reconstruction of a straight moving orbit}
\end{figure}

\begin{table}[htbp]
    \centering
    \caption{Relative errors for the reconstruction of a straightly moving orbit at different noise levels.}
    \label{tab:The variation of relative error for the line orbit}
    \begin{tabular}{ccccccc}
        \hline
        $\varepsilon$  & 0& $10^{-4}$ & $2\times 10^{-4}$ & $3\times 10^{-4}$ & $4\times 10^{-4}$ & $5\times 10^{-3}$ \\
        \hline
        Err &$1.78 \times 10^{-4}$ & 1.49\%& 2.98\%& 4.46\%& 5.95\%&74.53\%\\
        \hline
    \end{tabular}
\end{table}

 \textbf{Example 2.} Consider a  heart-shaped orbit in the $xy$-plane given by
$$\boldsymbol{a}(t)=\begin{bmatrix}
    50 \left(1 - \sin(100t)\right) \cos(100t) \\
    50 \left(1 - \sin(100t)\right) \sin(100t) \\
    0
\end{bmatrix}, t\in (0,T_0).$$
Figure \ref{figure:The reconstructed heart-shaped orbits under varying levels of noise} displays the reconstructed and original orbits under different levels of noise.
Again we are able to reconstruct the orbit with high accuracy, and the relative error is shown in Table \ref{tab:The variation of relative error for the heart-shaped orbit}. As illustrated by the Lipschitz stability estimate \eqref{stability}, Figure \ref{figure:The variation of relative error for the heart-shaped orbit} shows that the relative errors increase linearly with respect to the noise level $\varepsilon$.

\begin{figure}
\centering
\includegraphics[width=1\textwidth]{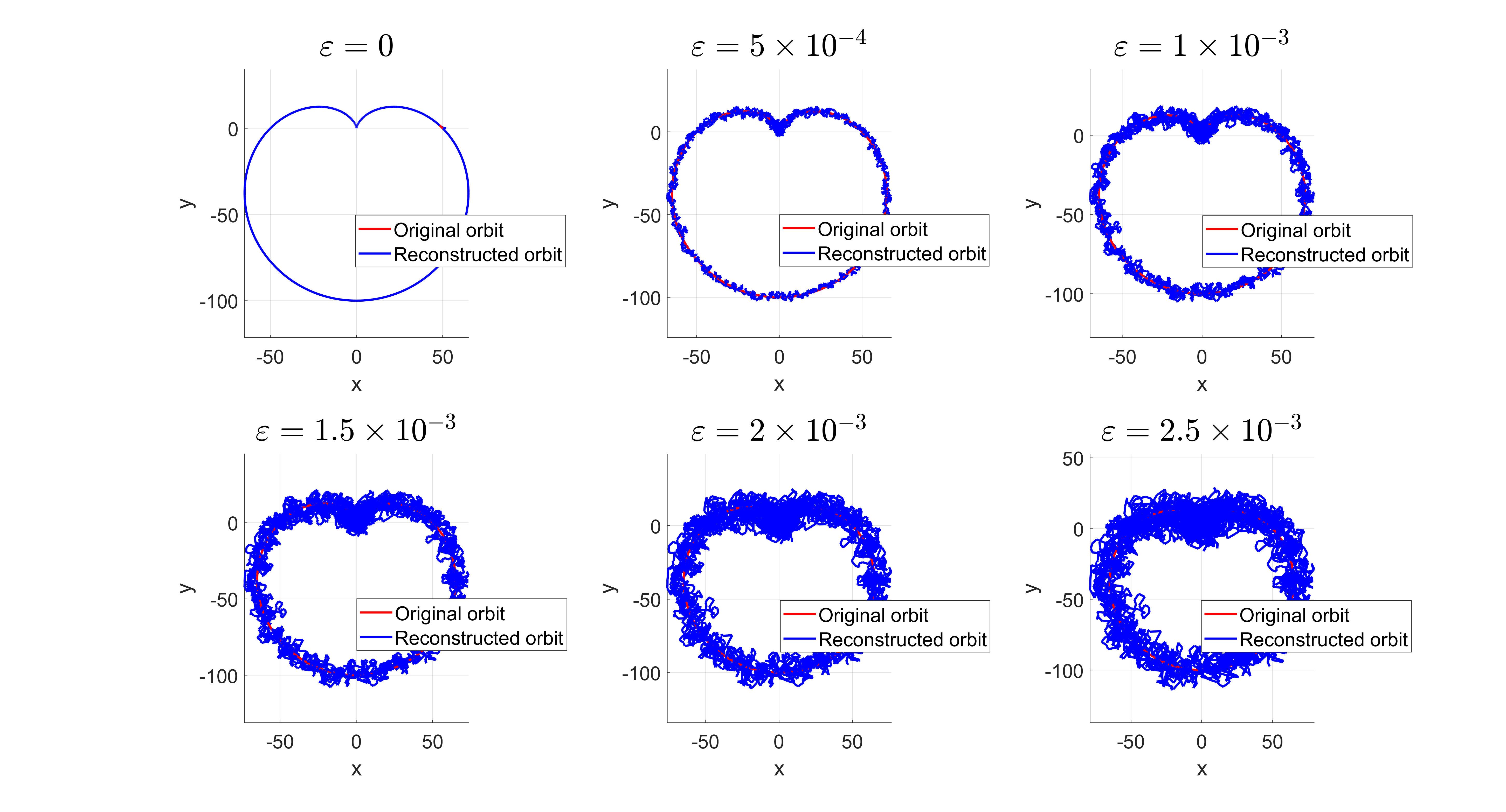}
\caption{Reconstructed (blue line) against exact (red line) solution for a heart-shaped moving orbit. Noise levels are $\varepsilon = 0, 5\times 10^{-4}, 1\times 10^{-3},  1.5\times 10^{-3},  2\times 10^{-3},  2.5\times 10^{-3}.$}
\label{figure:The reconstructed heart-shaped orbits under varying levels of noise}
\end{figure}

\begin{figure}
\centering
\includegraphics[width=0.8\textwidth]{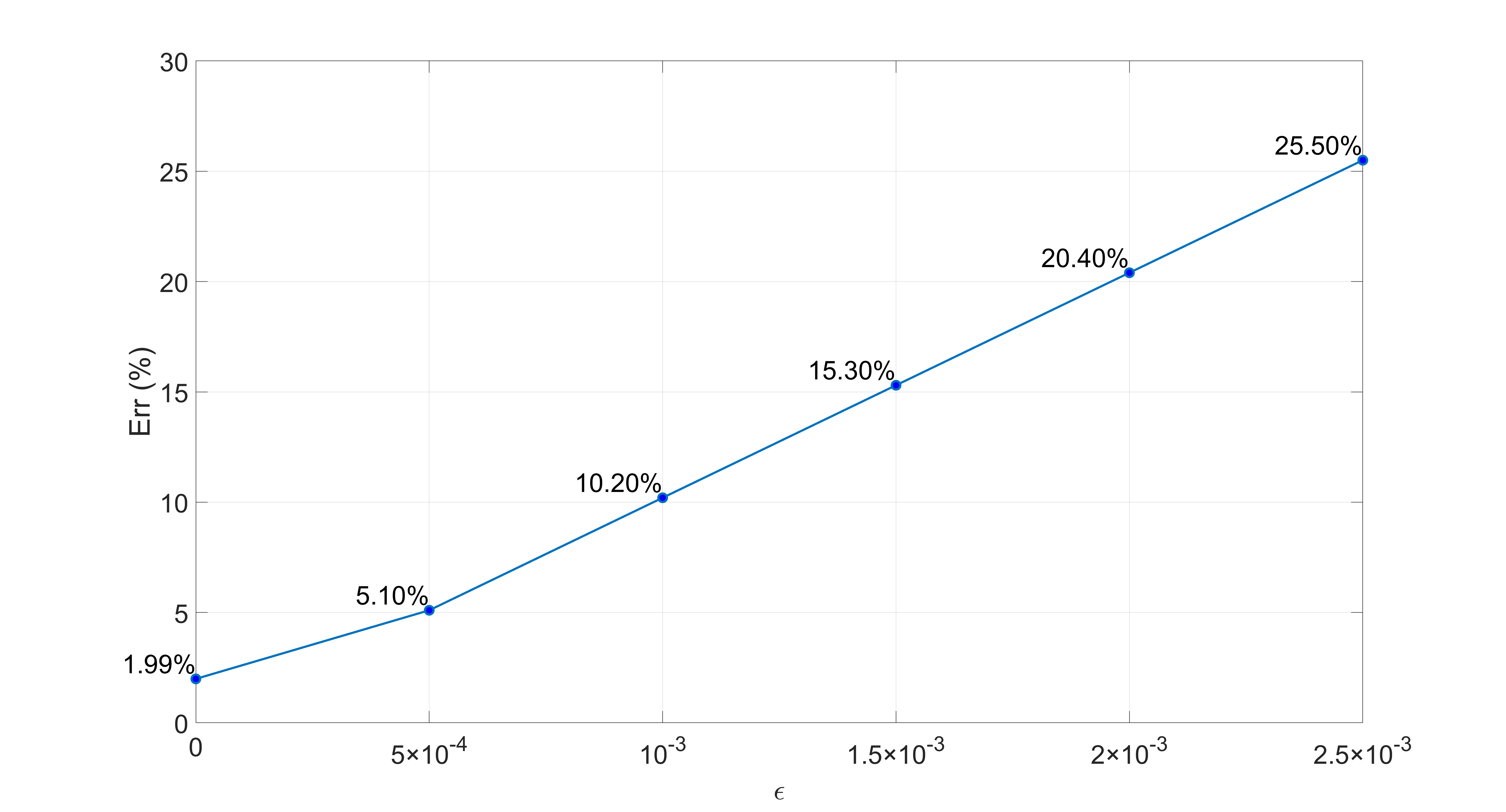}
\caption{Relative errors for the reconstruction of a heart-shaped orbit in the $xy$-plane at different noise levels. The x-axis represents the noise level $\varepsilon$, while the y-axis indicates the relative error Err.}
\label{figure:The variation of relative error for the heart-shaped orbit}
\end{figure}

\begin{table}[htbp]
    \centering
    \caption{The relative errors for the reconstruction of a heart-shaped orbit in the $xy$-plane at different noise levels.}
    \label{tab:The variation of relative error for the heart-shaped orbit}
    \begin{tabular}{ccccccc}
        \hline
        $\varepsilon$  & 0& $5 \times 10^{-4}$ & $1\times 10^{-3}$ & $1.5\times 10^{-3}$ & $2\times 10^{-3}$ & $2.5\times 10^{-3}$ \\
        \hline
        Err & 1.99\% & 5.10\%& 10.20\%& 15.30\%& 20.40\%& 25.50\%\\
        \hline
    \end{tabular}
\end{table}

 \textbf{Example 3.}  Consider a spiral orbit given by
$$\boldsymbol{a}(t)=\begin{bmatrix}
    50 \cos(100t) \\
    50 \sin(100t) \\
    1000t
\end{bmatrix}, t\in (0,T_0).$$

For this more complicated example, the proposed method is also effective and robust to reconstruct the orbit as shown in Figure \ref{figure:The reconstructed spiral orbits under varying levels of noise}, if the noise level is low. The measurement data at the noise level less than $\epsilon=0.1\%$ yield satisfactory reconstructions. Table \ref{tab:The variation of relative error for the spiral orbit} presents the relative error of the reconstructed orbit under different noise levels. Similar to the first two examples, it still maintains a linear increase in the relative error of the reconstructed orbit when the noise level increases as shown as Figure \ref{figure:The relative errors for the reconstruction of a spiral moving orbit}.

\begin{figure}
\centering
\includegraphics[width=1\textwidth]{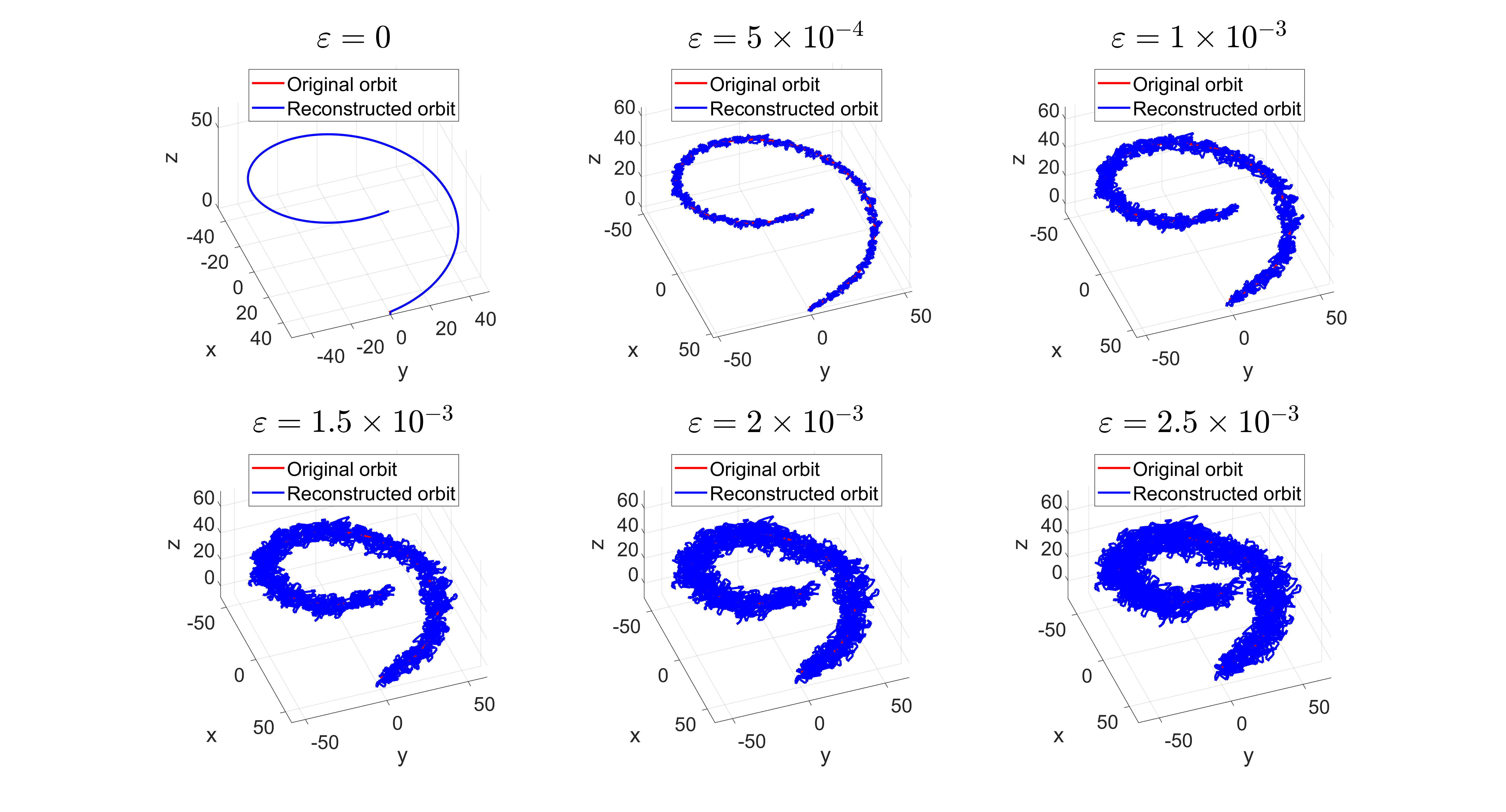}
\caption{Reconstructed (blue line) against exact (red line) solution for a spirally moving orbit. Noise levels are $\varepsilon = 0, 5\times 10^{-4}, 1\times 10^{-3},  1.5\times 10^{-3},  2\times 10^{-3},  2.5\times 10^{-3}.$}
\label{figure:The reconstructed spiral orbits under varying levels of noise}
\end{figure}

\begin{figure}
\centering
\includegraphics[width=0.8\textwidth]{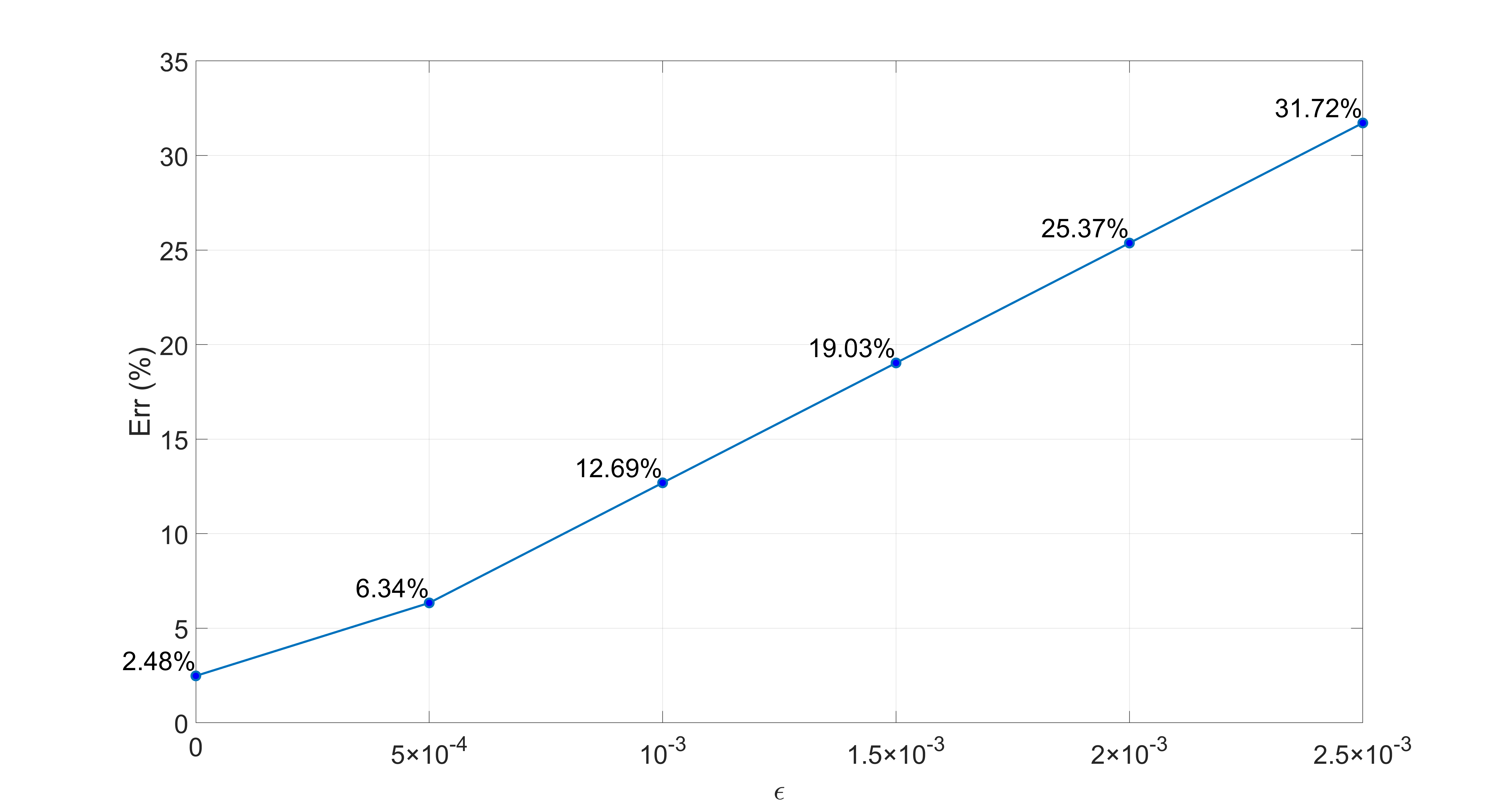}
\caption{Relative errors for the reconstruction of a spirally moving orbit at different noise levels. The $x$-axis represents the noise level $\varepsilon$, while the $y$-axis indicates the relative error Err.}
\label{figure:The relative errors for the reconstruction of a spiral moving orbit}
\end{figure}

\begin{table}[htbp]
    \centering
    \caption{Relative errors for the reconstruction of a spiral orbit at different noise levels.}
    \label{tab:The variation of relative error for the spiral orbit}
    \begin{tabular}{ccccccc}
        \hline
        $\varepsilon$  & 0& $5 \times 10^{-4}$ & $1\times 10^{-3}$ & $1.5\times 10^{-3}$ & $2\times 10^{-3}$ & $2.5\times 10^{-3}$ \\
        \hline
        Err & 2.48\% & 6.34\%& 12.69\%& 19.03\%& 25.37\%& 31.72\%\\
        \hline
    \end{tabular}
\end{table}

\subsection{Numerical examples at a low wave speed.}
We present a numerical experiment with a small wave speed to demonstrate that the wave speed is a crucial factor affecting the error. In this subsection we choose the sound speed $c=340$.
 The time interval length $T_0$ is set as $2\pi \times 10^{-1}$, and the orbit of the moving point source takes the form
$$\boldsymbol{a}(t)=\begin{bmatrix}
    5\cos(10t) \\
    5\sin(10t) \\
    10t
\end{bmatrix}, \quad t\in (0,T_0).$$
Except for the Runge-Kutta method's step size being $1 \times 10^{-4}$, all other settings remain the same as in the previous example. Figure \ref{figure:The reconstructed spiral orbits under varying levels of noise in small wave speed case} presents a comparison between the reconstructed results and the true solution under different noise coefficients. Table \ref{tab:The variation of relative error for the spiral orbit in small wave speed case} shows how the relative error varies with the noise coefficient. It can be observed that, compared with the large wave speed case, the reconstruction with small wave speed can tolerate relatively stronger noise while still maintaining good quality results. We observe from Figure \ref{figure:The reconstructed spiral orbits under varying levels of noise in small wave speed case} that the 12\%-polluted data can give acceptable reconstructions.  It reveals that our method is robust for slowly moving waves. 

\begin{figure}
\centering
\includegraphics[width=1\textwidth]{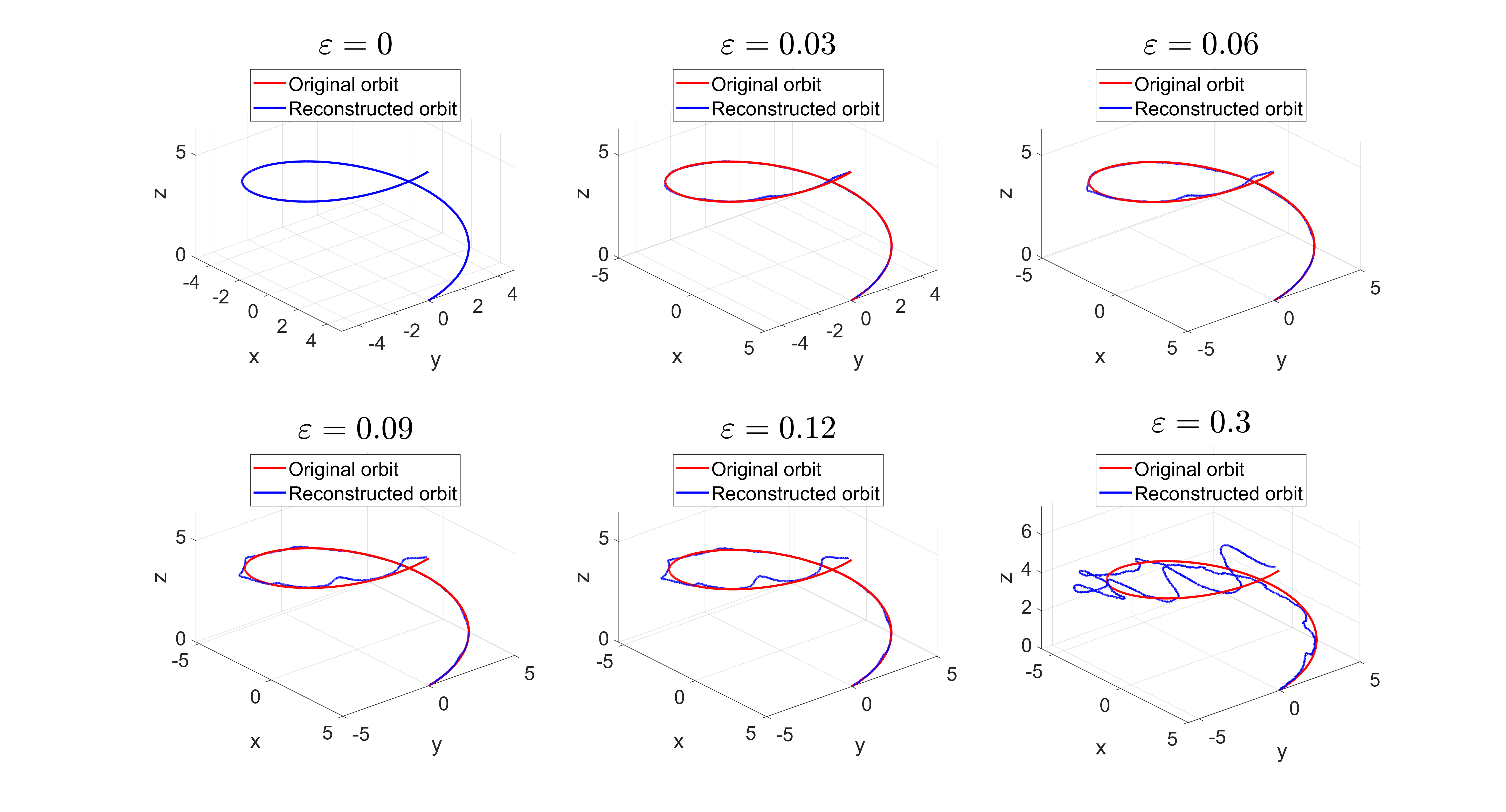}
\caption{Reconstructed (blue line) against exact (red line) solution for a spirally moving orbit (small wave speed case). Noise levels are $\varepsilon = 0, 0.03, 0.06,  0.09, 0.12, 0.3.$}
\label{figure:The reconstructed spiral orbits under varying levels of noise in small wave speed case}
\end{figure}

\begin{figure}
\centering
\includegraphics[width=0.8\textwidth]{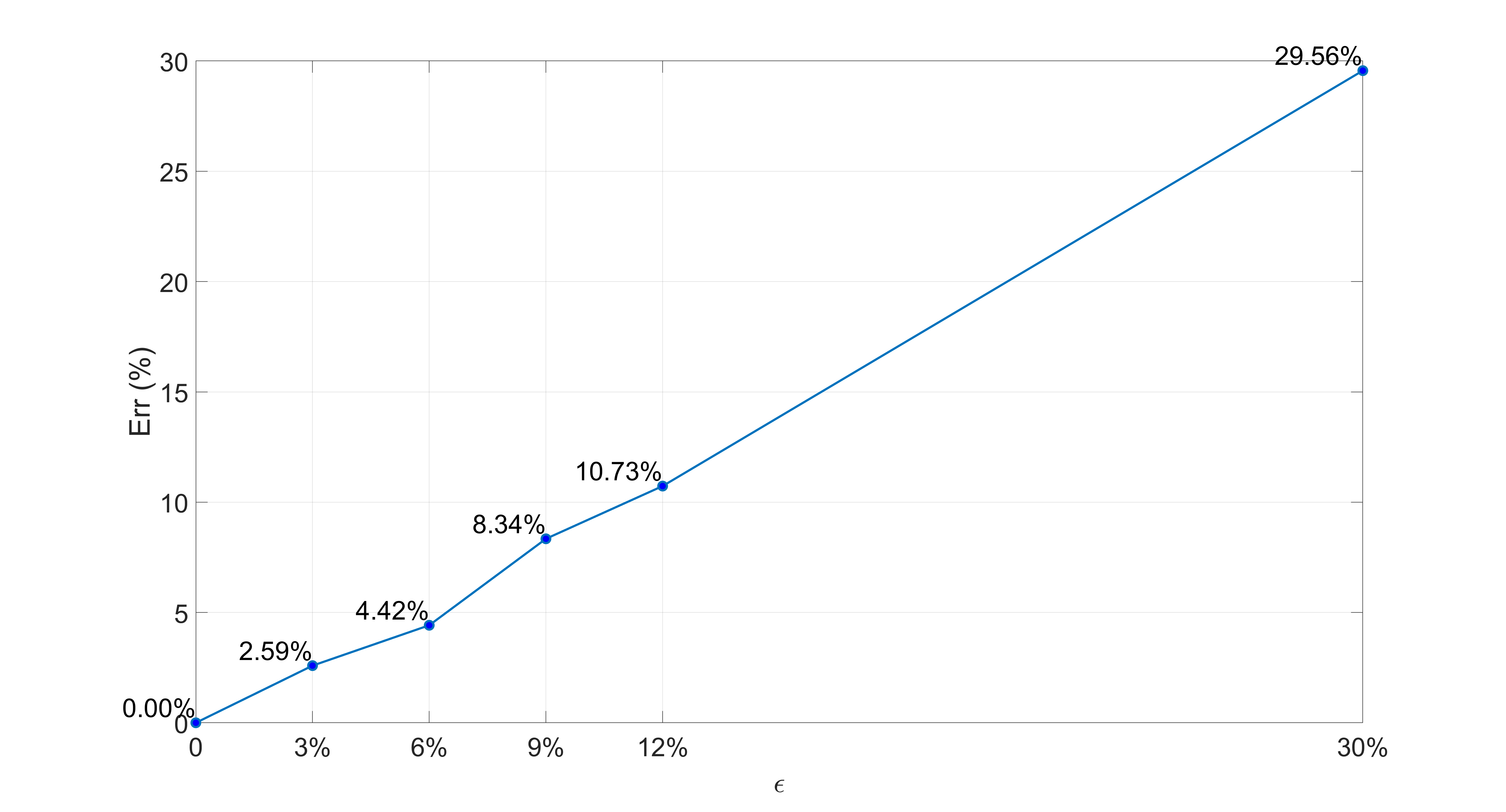}
\caption{Relative errors for the reconstruction of a spirally moving orbit at different noise levels (small wave speed case). The $x$-axis represents the noise level $\varepsilon$, while the $y$-axis indicates the relative error Err.}
\label{figure:The relative errors for the reconstruction of a spiral moving orbit in small wave speed case}
\end{figure}

\begin{table}[htbp]
    \centering
    \caption{Relative errors for spiral orbit reconstruction under different noise levels (small wave speed case)}
    \label{tab:The variation of relative error for the spiral orbit in small wave speed case}
    \begin{tabular}{ccccccc}
        \hline
        $\varepsilon$  & 0& $3\%$ & 6\% & 9\% & 12\% & 30\% \\
        \hline
        Err & $1.11 \times 10^{-6}$ & 2.59\%& 4.42\%& 8.34\%& 10.73\%& 29.56\%\\
        \hline
    \end{tabular}
\end{table}

\textbf{Further discussions on the relative errors.} In the noise-free case, the reconstructed orbit and the true orbit are almost identical, while the emergence of noise introduces a finite deviation between the reconstructed and original ones. As the noise increases, the algorithm still demonstrates certain noise-resistant capabilities. However, when the noise level reaches a certain threshold, the algorithm will fail. The reconstructed orbit will exhibit severe fluctuations, only reflecting the original orbit's trend of change, as demonstrated in Figure \ref{figure:The reconstructed spiral orbits under varying levels of noise} when the noise level is $2.5\times 10^{-3}$. Increasing the solution accuracy of the ordinary differential equations can improve the situation, although some deviation is inevitable.
Furthermore, our numerical experiment with small wave speed reveal that the wave speed is a critical factor affecting the relative error. The algorithm demonstrates significantly greater tolerance to noise under small wave speed conditions compared to large wave speed conditions. 

\section{Conclusion}
In this paper, we develop a numerical strategy to reconstruct the orbit of a moving point source in electromagnetics.
Using the dynamical magnetic field measured at a boundary point generated by a moving point source, we derive an ordinary differential equation (ODE) to describe the distance function between the boundary point and the moving source. By solving the ODE at several receivers we obtain the distance functions with each receiver. As a result, the orbit can be determined by choosing
four non-coplanar boundary points, and numerically we compute the location of the source by solving a linear system of equations.
Based on the reconstruction method, we also establish a Lipschitz stability estimate for the inverse problem. Numerical results show that the developed method
is effective and stable. The proposed approach requires the profile of the source to be a Dirac function. Hence, a possible continuation of this work is to consider the moving source $\boldsymbol F(\boldsymbol x, t) = p(\boldsymbol x - \boldsymbol a(t))\boldsymbol f(t)$, where $p$ is a general function with compact support. The explicit dependence of the stability coefficient on the wave speed deserves to be further investigated. 
More challenging problems include inverse moving point source problems in inhomogeneous media, with several moving orbits and inverse moving obstacle problems through using active emitters.  We  hope to report the progress on these problems
elsewhere in the future.

%

\end{document}